\documentclass[12pt,leqno]{article}

\usepackage{palatino,mathrsfs}
\usepackage[mathcal]{euler}

\usepackage{xcolor}

\usepackage{graphicx} 
\usepackage{epstopdf,epsfig}

\usepackage{comment} 
\usepackage{amsmath,amsthm,amsfonts,amssymb,latexsym,amscd,enumerate,url,hyperref}
\usepackage{amssymb}
\usepackage{url}
\usepackage{graphicx}
\usepackage[all,knot]{xy}
\xyoption{arc}

\usepackage{chngcntr}
 \counterwithin{equation}{section}

\usepackage{multirow}

\usepackage{caption}


\theoremstyle{plain}
\newtheorem*{theorem}{Theorem}
\newtheorem*{proposition}{Proposition}

\theoremstyle{definition}
\newtheorem*{defn}{Definition}

\theoremstyle{remark}
\newtheorem*{remark}{Remark}



\def\R{\mathbb R}
\def\C{\mathbb C}


\begin{document}

\title{Contractions of Representations and Algebraic Families of Harish-Chandra Modules}
 
\author{Joseph Bernstein\thanks{School of Mathematical Sciences, Tel Aviv University, Tel Aviv 69978,
Israel.} , \ Nigel Higson\thanks{Department of Mathematics, Penn State University, University Park, PA 16802, USA.} \ and Eyal Subag\footnotemark[2]}

 
 

\date{}

\maketitle


\begin{abstract}
\noindent 
We  examine from an algebraic point of view some families of unitary group representations that arise in mathematical physics and are  associated to  contraction families of Lie groups.  The contraction families  of groups    relate different real forms of a reductive group and  are continuously parametrized, but the unitary representations are defined over a parameter subspace   that  includes both discrete and continuous parts. Both finite- and infinite-dimensional representations can occur, even within the same family.
We shall  study the simplest nontrivial examples, and use the concepts of algebraic families of Harish-Chandra pairs and Harish-Chandra modules, introduced in a previous paper, together with the Jantzen filtration, to construct these    families of unitary representations algebraically. 
\end{abstract}

\setcounter{tocdepth}{2}


\section{Introduction} 
In a previous paper \cite{Ber2016} we showed how  certain contraction families of Lie groups that arise in mathematical physics  \cite{Segal51,Inonu-Wigner53}  can be constructed    as real points of   algebraic families of complex algebraic groups.  In this paper we shall examine associated  families of unitary representations, and show how they  can be obtained from  algebraic families of Harish-Chandra modules. 

The focus of our study will be a family of groups obtained from  $SU(1,1)$ and its Cartan decomposition
\[
\mathfrak{su}(1,1) = \mathfrak{u}(1)\oplus \mathfrak p,
\]
where $\mathfrak {u}(1)$ is realized as the diagonal matrices in $\mathfrak{su}(1,1)$, and $\mathfrak{p}$ is the vector space of matrices in $\mathfrak{su}(1,1) $ with zero diagonal entries.
The  group  $SU(1,1)$ may be contracted to its Cartan motion group $U(1)\ltimes \mathfrak p$. This means that  a smooth family of Lie groups $\{G_t\}_{t\in \R}$ may be constructed  with 
\[
G_t\cong
\begin{cases}  SU(1,1) &   t\neq 0 \\ U(1)\ltimes \mathfrak p & t = 0 .
\end{cases}
\]
 There is also a very similar  contraction of $SU(2)$ to the same motion group.  For all this see  \cite{Dooley85,Dooley83}. 
 
Every infinite-dimensional  unitary irreducible representation of the motion group  $U(1)\ltimes \mathfrak p$ can be approximated by two different families of representations associated to these two contractions.
The first is a continuous family of unitary principal series representations of $SU(1,1)$ and the second is a discrete family of finite-dimensional irreducible representations of $SU(2)$.  These two families of representations  are examples of the two different procedures for contraction of representations that were introduced in \cite[Sections 2(a) and 2(b)]{Inonu-Wigner53}.  

The problem that we shall address in this paper is to understand these families algebraically. 
We shall show that   the two  families of representations  may be obtained  from  one algebraic family of Harish-Chandra  modules by means of a single procedure that uses   Jantzen filtration techniques. 

In \cite{Ber2016} we constructed an  algebraic family of Harish-Chandra pairs  $(\boldsymbol{\mathfrak{g}},{\boldsymbol{K}})$   over the complex affine line\footnote{Actually the family extends to a family over the projective line, but the phenomena that we shall study in this paper are purely local, and so we shall work over the affine line.} with fibers   
\begin{eqnarray*}
&&\bigl (\boldsymbol{\mathfrak{g}}|_z,\boldsymbol{K}|_z\bigr )\cong \begin{cases}
\bigl (\mathfrak{su}(1,1)_\C,U(1)_\C\bigr )&z\neq 0\\
\bigl (\mathfrak{u}(1)_\C \ltimes \mathfrak{p}_\C, U(1)_\C\bigr )& z=0 .
\end{cases} 
\end{eqnarray*}
It may be equipped with a real structure $\sigma$, and there is a corresponding family of real groups over the real line with  fibers  
\begin{eqnarray*}
&&\boldsymbol{G}^{\sigma}|_{t}\cong \begin{cases}
SU(1,1)&t>0\\
U(1)\ltimes \R^2& t=0\\
SU(2)& t<0 .
\end{cases} 
\end{eqnarray*}
We shall review these constructions in Sections~\ref{sec-algebraic-families} and \ref{sec-su(1,1)-family}.  The family $\boldsymbol{G}^{\sigma}$ combines the two contractions mentioned above.

Let $\mathcal{F}$ be   a  quasi-admissible and    generically irreducible  algebraic family of Harish-Chandra modules   for  $(\boldsymbol{\mathfrak{g}},{\boldsymbol{K}})$, and assume that $\mathcal{F}$ is rationally isomorphic to its $\sigma$-twisted dual $\mathcal{F}^{\langle\sigma\rangle}$; see Section~\ref{sec-twist}.  We shall   obtain from  $\mathcal{F}$  a family  of unitary representations of  a subfamily of $\boldsymbol{G}^{\sigma}$.  

Our method is to  associate to an   intertwining operator  from  $\mathcal{F}$  to   $\mathcal{F}^{\langle\sigma\rangle}$  (defined over the field of rational functions) Janzten-type filtrations of the fibers  of $\mathcal{F}$. Jantzen's technique equips the subquotients with nondegenerate hermitian forms.   If we isolate those subquotients on which the hermitian forms are  definite, then the Harish-Chandra module structures on these fibers may be integrated to unitary group representations.  

The generically irreducible,  algebraic families of Harish-Chandra modules  for the family  $(\boldsymbol{\mathfrak{g}},{\boldsymbol{K}})$ were analyzed in \cite{Ber2016}.  Applying the above method to a family of ``spherical principal series type'' we obtain precisely the family of unitary representations described above, consisting of the family of all unitary spherical principal series representations for $SU(1,1)$ when $t>0$, together with highest weight $2m$    spherical unitary irreducible representations of $SU(2)$ when $t=-1/(2m+1)^2$.  (In addition we obtain the unitary irreducible representation of the motion group at $t=0$ to which both of these families of representations converge.)

In summary, we are able to  place  the contraction families  of unitary representations within a purely algebraic context.  This is despite the fact that the ``spectrum'' of values $t$ over which the contraction families are defined includes both discrete and continuous parts, together. 

In fact the structure of this spectrum strongly recalls  the quantum mechanics of the hydrogen atom. It will be shown elsewhere that the  family of Harish-Chandra pairs studied in this paper arises as    symmetries of  the Schr\"{o}dinger equation for the hydrogen atom, and the collection of all physical solutions of the Schr\"{o}dinger equation coincides with  one of the families of representations studied here. This suggests that our techniques for obtaining unitary  representations   from   algebraic families of Harish-Chandra modules  may be useful for  quantization.

Another application that we shall explore elsewhere is  the Mackey bijection.  Mackey showed in some specific cases how to place most of the unitary dual of  a  semisimple Lie group   in bijection with most of the unitary dual of the corresponding Cartan motion group  \cite{Mackey75}.  Later on a precise bijection between the \emph{tempered} duals was obtained---first for complex groups in  \cite{Higson08}, and then for real groups in \cite{Afgoustidis15}. The bijection for complex groups was examined from an algebraic perspective in  \cite{Higson11}, and extended to admissible duals. This latter bijection fits very well with the methods of this paper, although, since unitarity is not the main issue for the Mackey bijection, we shall identify appropriate subquotients in this context using minimal $K$-types rather than definiteness of hermitian forms.

 Finally, perhaps it is also worth mentioning that the algebraic  families of Harish-Chandra pairs considered here, which place    real reductive pairs together with their compact forms,  provide a means to study  hermitian forms on Harish-Chandra modules and  $c$-invariant forms in the sense of  \cite[Section 10]{AdamsEtAl2015}  within one algebraic context.  See also \cite{Adams2017} for another study of families of Harish-Chandra modules inspired by similar considerations.

The first and third authors were partially supported by ERC grant 291612. Part of their work on this project was done at Max-Planck Institute for Mathematics, Bonn, and they would like to thank MPIM for the very stimulating atmosphere. The second author was partially supported by NSF grant DMS-1101382.

\section{Algebraic families and real structures}
\label{sec-algebraic-families}
In this section we fix notations and quickly recall some   definitions that were given in \cite{Ber2016} (we refer the reader to that paper for more details). 
Throughout,  by a \emph{variety} we shall mean an irreducible, nonsingular, quasi-projective, complex algebraic variety (in all the examples and computations done in the paper, the variety will be the complex affine line).   If $X$ is a variety, then we shall  denote its structure sheaf by $O_X$.

\subsection{Algebraic families of Harish-Chandra  pairs}
An \emph{algebraic family of complex Lie algebras} over a variety $X$ is    a locally free sheaf of $O_X$-modules   that is equipped with an $O_X$-linear Lie bracket to make it a sheaf of Lie algebras.  An \emph{algebraic family of complex algebraic groups} over $X$ is a smooth group scheme over $X$. Thanks to the smoothness assumption,   to any algebraic family of complex algebraic groups   there is a corresponding algebraic family of complex Lie algebras.

Suppose given an algebraic family  ${\boldsymbol{K}}$ of complex algebraic groups and an  algebraic family $\boldsymbol{\mathfrak{g}}$ of complex Lie algebras   on which ${\boldsymbol{K}}$ acts, along with  a ${\boldsymbol{K}}$-equivariant embedding of families of Lie algebras $j:\operatorname{Lie}(\boldsymbol{K})\longrightarrow \boldsymbol{\mathfrak{g}}$.  This data defines an \emph{algebraic family of Harish-Chandra pairs} if the two actions of $\operatorname{Lie}(\boldsymbol{K})$ on $\boldsymbol{\mathfrak{g}}$ coincide.

We shall  deal in this paper with algebraic families of Harish-Chandra pairs $(\boldsymbol{\mathfrak{g}},{\boldsymbol{K}})$ with a constant group scheme whose fiber is  connected and reductive. That is,  we shall deal here only with cases where $\boldsymbol{K}=X{\times} K$ for  some  complex connected reductive group $K$. 

\subsection{Algebraic families of Harish-Chandra  modules}
 Let $(\boldsymbol{\mathfrak{g}},{\boldsymbol{K}})$ be an algebraic family of   Harish-Chandra pairs over $X$. An \emph{algebraic family of Harish-Chandra modules} for  $(\boldsymbol{\mathfrak{g}},{\boldsymbol{K}})$ is a flat, quasicoherent  $O_X$-module $\mathcal{F}$  that is equipped with
\begin{enumerate}[\rm (a)]

\item an  action of $\boldsymbol{K}$ on $\mathcal{F}$, and

\item an  action  of  $\boldsymbol{\mathfrak{g}}$ on $\mathcal{F}$,

\end{enumerate}
such that  the action morphism
\[
 \boldsymbol{\mathfrak{g}}\otimes_{O_X}\mathcal{F}\longrightarrow \mathcal{F}
\]
 is $\boldsymbol{K}$-equivariant, and such that the differential of the $\boldsymbol{K}$-action in (a) is equal to the composition of the embedding of $\operatorname{Lie}(\boldsymbol{K})$ into $\boldsymbol{\mathfrak{g}}$   with the action of   $\boldsymbol{\mathfrak{g}}$ on $\mathcal{F}$. 

Assuming, as indicated above, that the  family $\boldsymbol{K}$ is constant and reductive,  $\mathcal{F}$  is said to be    \emph{quasi-admissible}  if 
$[ \mathcal{L}\otimes_{O_X} \mathcal{F}]^{\boldsymbol{K}}$ is a locally free and finitely generated sheaf of $O_X$-modules  for every  family $\mathcal{L}$ of representations of $\boldsymbol{K}$ that is locally free and   finitely generated  as an $O_X$-module.
In this case, there is a canonical isotypical decomposition  
   \[
   \mathcal F = \bigoplus _{\tau\in \widehat K}   \mathcal{F}_\tau .
   \]

\subsection{Real structures}

 Denote by $\overline{X}$  the complex conjugate variety of $X$. A \emph{real structure} on $X$  is a morphism of varieties   $\sigma_X:X\longrightarrow \overline{X}$ such that  the composition 
  $$\xymatrix{
X\ar[r]^{\sigma_X} & \overline{X}\ar[r]^-{\overline{\sigma_X}} & X} $$
 is the identity morphism (here $\overline{\sigma_X}$ is equal to $\sigma_X$ as a map of sets; it is a morphism of varieties from $\overline{X}$ to $X$). Denote by $X^{\sigma}\subseteq X$ the set of points  that are  fixed by $\sigma$ (recall that  $X$ and $\overline{X}$ are equal as sets).   The fixed set might be empty.
 
 If $\mathcal{F}$ is a sheaf of $O_X$-modules, then its complex conjugate $\overline {\mathcal{F}}$  is a sheaf of $O_{\overline X}$-modules.  Given a real structure on $X$, a (compatible)  real structure on $\mathcal{F}$ is a morphism
 \[
 \sigma_\mathcal{F}:\mathcal{F}\longrightarrow \sigma_X^*\overline{\mathcal{F}} 
   \]
  of $O_X$-modules  for which the composition 
\[
\xymatrix@C=30pt{
\mathcal{F}   \ar[r]^-{\sigma_{\mathcal{F}}} &
\sigma_X^*\overline{\mathcal{F}}   \ar[r]^-{\sigma_X^* ( \overline{\sigma_{\mathcal{F}}}) } &
\sigma_X^* \overline{\sigma_X^*\overline{\mathcal{F}}}  \ar[r]^-\cong & \mathcal{F}} 
\]
 is the identity morphism.

Finally, let  $(\boldsymbol{\mathfrak{g}},{\boldsymbol{K}})$  be an algebraic family of Harish-Chandra pairs over $X$.
A \emph{real structure on $(\boldsymbol{\mathfrak{g}},{\boldsymbol{K}})$} is a triplet $ \{\sigma_X,\sigma_{\boldsymbol{\mathfrak{g}}},\sigma_{\boldsymbol{K}}\}$  with $\sigma_X$ a real structure on $X$, $\sigma_{\boldsymbol{\mathfrak{g}}}$ a compatible real structure on the  $O_X$-module $\mathfrak{g}$, and  $\sigma_{\boldsymbol{K}}$ a real structure on  $\boldsymbol{K}$ for which  the morphisms
\begin{eqnarray*}
&&\sigma_{\boldsymbol{\mathfrak{g}}}:\boldsymbol{\mathfrak{g}}\longrightarrow \sigma_X^*\overline{\boldsymbol{\mathfrak{g}}}\\
&&\sigma_{\boldsymbol{K}} :\boldsymbol{K}\longrightarrow \sigma_X^*\overline{\boldsymbol{K}}
\end{eqnarray*} 
are a morphism of algebraic families of Harish-Chandra pairs over $X$.
 
 For  further discussions about real structures see for example   \cite[Chapter 1]{Borel1991} or \cite[Chapter 11]{Springer}.

 \subsection{The   sigma-twisted dual}
 \label{sec-twist}
 If $(\mathfrak{g}, K)$ is an ordinary Harish-Chandra pair (not a family) and if $V$ is an admissible $(\mathfrak{g}, K)$-module, then its \emph{contragredient}    $V^{\dagger}$ is the admissible $(\mathfrak{g}, K)$-module consisting of all $K$-finite  linear functionals on $V$.  The   conjugate contragredient   $\overline{V^{\dagger}}$  is a   $(\overline{\mathfrak{g}},\overline{K})$-module, but if $(\mathfrak{g},K)$ is  equipped with a real structure $\sigma$, then the conjugate contragredient becomes an admissible Harish-Chandra module $V^{\langle\sigma\rangle}$   for  $(\mathfrak{g},{{K}})$, called the \emph{$\sigma$-twisted dual} of $V$ (compare \cite[Section 8]{AdamsEtAl2015}, where this is called the \emph{$\sigma$-Hermitian dual}). 
 
This construction is easily generalized to the case of families, at least  when the algebraic family of groups   $ {\boldsymbol{K}} $ is constant and reductive.  If $\mathcal F$ is a quasi-admissible algebraic family of Harish-Chandra modules for $(\boldsymbol{\mathfrak{g}},\boldsymbol{K})$, with isotypical decomposition 
\[  
\mathcal F = \oplus _{\tau\in \widehat K}   \mathcal{F}_\tau
\]
then we define its contragredient to be 
\[
\mathcal{F}^{\dagger} = \bigoplus _{ {\tau\in \widehat K}} \mathcal{F}^{\dagger}_{\tau} = \bigoplus _{ {\tau\in \widehat K}}\mathcal{H}\mathrm{om}_{O_X}(\mathcal{F}_{\tau},O_X) ,
\]
which is  a quasi-admissible family of Harish-Chandra modules for 
$(\boldsymbol{\mathfrak{g}},{\boldsymbol{K}})$. 

The complex conjugate of $ {\mathcal{F}^\dagger}$ is a quasi-admissible algebraic family of Harish-Chandra modules for the family $(\overline{\boldsymbol{\mathfrak{g}}}, \overline{\boldsymbol{K}})$ over $\overline X$.

\begin{defn}
Let $(\boldsymbol{\mathfrak{g}},{\boldsymbol{K}})$  be an algebraic family of Harish-Chandra pairs over $X$ for which $\boldsymbol{K}$ is a constant and reductive algebraic family of groups.   Let  $ \{\sigma_X,\sigma_{\boldsymbol{\mathfrak{g}}},\sigma_{\boldsymbol{K}}\}$   be a real structure on $(\boldsymbol{\mathfrak{g}},{\boldsymbol{K}})$. If  $\mathcal{F}$ is a quasi-admissible algebraic family of Harish-Chandra modules for  $(\boldsymbol{\mathfrak{g}},{\boldsymbol{K}})$, then  the \emph{$\sigma$-twisted dual of $\mathcal{F}$} is the sheaf 
\[
\mathcal{F}^{\langle\sigma\rangle} = \sigma_X^*\overline{\mathcal{F}^{\dagger}}
\]
 equipped with  the $(\boldsymbol{\mathfrak{g}},{\boldsymbol{K}})$-module structure obtained by composition with  the morphism of families of Harish-Chandra pairs 
\[
\xymatrix@C=40pt{
(\boldsymbol{\mathfrak{g}}, \boldsymbol{K}) 
\ar[r]^-{(\sigma_\mathfrak{g}, \sigma_{K})} &
 (\sigma_X^*\overline{\boldsymbol{\mathfrak{g}}}, \sigma_X^*{\overline{\boldsymbol{K}}})
 }
 \]
over $X$.
 \end{defn}

\section{The SU(1,1) family } 
\label{sec-su(1,1)-family}
In  \cite{Ber2016}  algebraic families over the complex projective line were attached to many  classical symmetric pairs of algebraic groups, and   real structures were constructed on these families. In this section we shall  remind the reader of the construction as it applies to $SL(2,\mathbb{C})$ and its diagonal subgroup, viewed as the fixed group of the involution
\[
\Theta\colon \begin{bmatrix} a & b \\ c & d \end{bmatrix} \longmapsto \begin{bmatrix} a & -b \\ -c & d \end{bmatrix} .
\]
In  \cite{Ber2016} we obtained a family over the projective line; here we shall consider only its restriction to the affine line,   which is enough for our purposes.

\subsection{An algebraic family of groups}

The family of complex algebraic groups is a subfamily of the constant family over $\C$ with fiber $ SL(2,\C){\times }SL(2,\C)$. 
The  fiber over $z \in \C$ is       
\begin{eqnarray*}
&& \boldsymbol{G}\vert _{z}=\left\{ \Bigl (\begin{bmatrix}
a &   b\\
z c & d
\end{bmatrix} , \begin{bmatrix}
a & z b\\
  c & d
\end{bmatrix} \Bigr ) :  ad-z bc=1  \right\} .
\end{eqnarray*}
It contains as a subfamily the constant family with fiber
\begin{eqnarray*}
&&  {K} =\left\{ \Bigl  (  \begin{bmatrix}
a & 0\\
0 & a^{-1}
\end{bmatrix} , \begin{bmatrix}
a & 0\\
0 & a^{-1}
\end{bmatrix} \Bigr ): a\in \C^*  \right\} .
\end{eqnarray*}
 The fibers of $\boldsymbol{\mathfrak{g}}$, the  family  of complex Lie algebras  corresponding to $\boldsymbol{G}$,  are 
\begin{eqnarray*}
&&\boldsymbol{\mathfrak{g}}\vert _{z}
=    \left \{ \Bigl (\begin{bmatrix}
a &   b\\
z c & -a
\end{bmatrix} , \begin{bmatrix}
a & z b\\
  c & -a
\end{bmatrix}\Bigr )  : a,b,c\in \mathbb{C} \right \} .
\end{eqnarray*}
The family $(\boldsymbol{\mathfrak{g}},{\boldsymbol{K}})$ is an algebraic family of Harish-Chandra pairs over $\C$.

\subsection{Real structure}

A real structure on   the family $(\boldsymbol{\mathfrak{g}},{\boldsymbol{K}})$   above is given by the   involution 
\[
  \sigma_{X}  \colon z \longmapsto \overline{z}
\]
on the base space $\C$, and the real structure
\[
\sigma_{\boldsymbol{G}} 
\colon 
 \boldsymbol{G}\vert _z \longrightarrow   \overline{ \boldsymbol{G}\vert _{\overline{z}}} ,
\qquad 
 \sigma_{\boldsymbol{G}}  \colon  (S,T)  
  \longmapsto  
 (\Theta(T^{-1})^*, \Theta(S^{-1})^* ) 
 \]
on $\boldsymbol{G}$, which determines real structures   
on  $\boldsymbol{\mathfrak{g}}$ and $\boldsymbol{K}$.
The explicit formula for the real structure on $\boldsymbol{G}$ is 
 \[
 \sigma_{\boldsymbol{G}} 
 \colon
  \Bigl (
  \begin{bmatrix}
a &   b\\
z c & d
\end{bmatrix}
 , 
\begin{bmatrix}
a & z b\\
  c & d
\end{bmatrix}
\Bigr ) 
\longmapsto
 \Bigl (
 \begin{bmatrix}
\overline d &   \overline c \\
\overline{z b} & \overline a
\end{bmatrix}
 , 
 \begin{bmatrix}
\overline d & \overline{z c} \\
 \overline{b} &  \overline a
\end{bmatrix}
\Bigr )  ,
 \] 
from which it is clear that the fixed groups of the involution over $\R\subseteq \C^{\sigma}$ are the real groups 
 \[
 \boldsymbol{G}\vert _{x}^{\sigma}=\left\{ \left( 
\begin{bmatrix}
 a &   b\\
x \overline{b}  & \overline a
\end{bmatrix}
 , 
 \begin{bmatrix}
 a & x b\\
  \overline{b}  & \overline a
\end{bmatrix}
\right)  :  |a|^2 - x |b|^2 =1 \right\} .
\]
So we see that 
\[
\boldsymbol{G}\vert _{x}^{\sigma}\cong\begin {cases} SU(1,1) &  x >0\\
 U(1)\ltimes \C & x=0\\
 SU(2) & x <0 ,
 \end{cases}
 \]   
where the action of $U(1)$ on $\C$ is scalar multiplication by the squares of elements in $U(1)$.

\subsection{The Casimir section}
 \label{subsec-casimir}
Associated to the algebraic family of Lie algebras $\boldsymbol{\mathfrak{g}}$ there is a family of enveloping algebras.
It is generated by the sections 
\begin{eqnarray*}
&&H: z \longmapsto  \Bigl ( \begin{bmatrix}
1 &   0\\
0 & -1 
\end{bmatrix}
 ,   
 \begin{bmatrix}
1  & 0\\
 0  & -1
\end{bmatrix}
 \Bigr ) ,
 \\ 
 \nonumber
&&E\,: z\longmapsto  \Bigl ( 
\begin{bmatrix}
0 &   \phantom{..}1\phantom{.}\\
0  &\phantom{.}0
\end{bmatrix}
 ,  
 \begin{bmatrix}
0 & \phantom{..}z  \\
 0  & \phantom{..}0
\end{bmatrix}  
\Bigr )  ,
\\ 
\nonumber
&&F\,\,: z\longmapsto  \Bigl ( 
\begin{bmatrix}
0 &   \phantom{..}0\\
z & \phantom{..}0 
\end{bmatrix}  
,
\begin{bmatrix}
0 & \phantom{..}0\\
 1  & \phantom{..}0
\end{bmatrix} 
\Bigr ) .
\end{eqnarray*}
The sections   $H,E,F$ are almost a standard $\mathfrak{sl}(2)$-triplet: 
\begin{eqnarray} \label{87}
&&[H,E ]=2E , \quad [H,F ]=-2F,\quad \text{and}\quad [E,F ]=zH .
\end{eqnarray}
Of special concern to us is the  \emph{Casimir section}
\begin{eqnarray*}
&&C=H^2 + \frac{2}{z}EF+ \frac{2}{z}FE =  H^2+2H+\frac{4}{z}FE
\end{eqnarray*} 
over $\C\setminus \{ 0\}$, which  is invariant under the adjoint action of $\boldsymbol{G}$.
(This is the same as the Casimir section considered in \cite{Ber2016}, but it is expressed here in terms of different generators.)

\subsection{Generically  irreducible families of modules}
\label{sec-twisted-duals}

In this section we     recall some facts about quasi-admissible and  generically irreducible families of Harish-Chandra modules for $(\boldsymbol{\mathfrak{g}},{\boldsymbol{K}})$ that were noted in \cite{Ber2016}. 

Since we are working over the base space $X=\C$, we can and will represent sheaves of $O_X$-modules  by their global sections, which are modules over the ring $\mathcal O$ of polynomial functions on $\C$. 
Let $\mathcal K$ be the field of rational functions on $\C$.  
A quasi-admissible family $\mathcal{F}$ of Harish-Chandra modules for $(\boldsymbol{\mathfrak{g}},{\boldsymbol{K}})$  is \emph{generically irreducible} if the  representation of the Lie algebra  $\mathcal K \otimes _{\mathcal O} \boldsymbol{\mathfrak {g}}$ on $\mathcal{K} \otimes _{\mathcal O} \mathcal {F}$ is   irreducible over  the algebraic closure of $\mathcal K$.  
Equivalently, in the present context, $\mathcal F$ is generically irreducible if the fiber $\mathcal F \vert _z$ is an irreducible $\boldsymbol{\mathfrak{g}}\vert _z$-module for all except at most countably many $z$.

If $\mathcal F$ is quasi-admissible and generically irreducible, then the  
    Casimir section $C$ acts as multiplication by a regular function $c_{\mathcal F}$  on $\C\setminus \{ 0\}$.   This is a first invariant of quasi-admissible and generically irreducible families of Harish-Chandra modules.
A second invariant is the set of $ {K}$-types, or weights, that occur in   a quasi-admissible and generically irreducible  family.  This set must  agree with the set of weights of some irreducible $(\mathfrak{sl}(2,\mathbb{C}),K)$-module, where $K$ is the diagonal subgroup in $SL(2,\C)$.  All the nonzero weights have multiplicity one. 

These two invariants fall quite far short of determining  quasi-admissible and generically irreducible  families up to isomorphism, but they do determine the families up to rational isomorphism: 

  \begin{proposition}
Let  $\mathcal {F}_1$ and $\mathcal{F}_2$ be two quasi-admissible and generically irreducible families of Harish-Chandra modules for  $(\boldsymbol{\mathfrak{g}},{\boldsymbol{K}})$.  If they have the same Casimir and weight invariants, then 
there is an isomorphism of $\mathcal K$-vector spaces
\[
\mathcal{K} \otimes _{\mathcal O} \mathcal {F}_1 \stackrel \cong \longrightarrow 
\mathcal{K} \otimes _{\mathcal O} \mathcal {F}_2
\]
that intertwines the actions of $\mathcal{K}\otimes _{\mathcal O} \boldsymbol{\mathfrak{g}}$. \end{proposition}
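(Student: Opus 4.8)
The plan is to do everything after base change to the field $\mathcal{K}=\C(z)$ of rational functions, where the family $\boldsymbol{\mathfrak{g}}$ is freely spanned by the sections $H,E,F$ of Section~\ref{subsec-casimir} subject only to the relations \eqref{87}. Since $z$ is a unit in $\mathcal K$, the triple $(H,E,z^{-1}F)$ is a standard $\mathfrak{sl}(2)$-triple, so $\mathcal{K}\otimes_{\mathcal O}\boldsymbol{\mathfrak{g}}$ is (noncanonically) the constant Lie algebra $\mathfrak{sl}(2,\C)$ over $\mathcal{K}$. Write $V_i=\mathcal{K}\otimes_{\mathcal O}\mathcal{F}_i$, identify $\widehat K$ with $\Z$ via the $H$-eigenvalue, let $S\subseteq\Z$ be the common weight invariant, and let $V_i=\bigoplus_{n\in S}V_i[n]$ be the decomposition into $H$-eigenspaces, which coincides with the $K$-isotypical decomposition. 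Because the nonzero weight multiplicities are one, and because generic irreducibility forces the generic fibre to be an irreducible $\mathfrak{sl}(2,\C)$-module (whose zero weight space, if nonzero, is also one-dimensional), each $V_i[n]$ with $n\in S$ is one-dimensional over $\mathcal{K}$, and $S$ is an arithmetic progression of step $2$.

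First I would fix a $\mathcal{K}$-basis vector $v_n^{(i)}$ of $V_i[n]$ for each $n\in S$ and record the structure constants $a_n^{(i)},b_n^{(i)}\in\mathcal{K}$ defined by $E v_n^{(i)}=a_n^{(i)}v_{n+2}^{(i)}$ and $F v_n^{(i)}=b_n^{(i)}v_{n-2}^{(i)}$ (with the convention that a constant is $0$ when its target weight is not in $S$). Evaluating the Casimir section $C=H^2+2H+\tfrac{4}{z}FE$ on $V_i[n]$ gives
\[
c_{\mathcal{F}_i}=n^2+2n+\frac{4}{z}\,a_n^{(i)}b_{n+2}^{(i)},\qquad n\in S,
\]
so that $a_n^{(i)}b_{n+2}^{(i)}=\tfrac{z}{4}\bigl(c_{\mathcal{F}_i}-n^2-2n\bigr)$, a quantity depending only on the Casimir invariant; by hypothesis it is the same for $i=1$ and $i=2$. (One checks directly that the relation $[E,F]=zH$ is automatically consistent with this identity.)

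Next I would show that $a_n^{(i)}\neq 0$ whenever $n,n+2\in S$, and symmetrically $b_n^{(i)}\neq 0$ whenever $n,n-2\in S$. Indeed, if $a_n^{(i)}=0$ for some such $n$, then $W=\bigoplus_{m\in S,\ m\le n}\mathcal{K}v_m^{(i)}$ is a $\mathcal{K}\otimes_{\mathcal O}\boldsymbol{\mathfrak{g}}$-submodule of $V_i$: it is stable under $H$ and $F$, and stable under $E$ since $E v_m^{(i)}$ has weight $m+2\le n$ for $m<n$ while $E v_n^{(i)}=0$. This submodule is nonzero and proper (it omits $v_{n+2}^{(i)}$), contradicting irreducibility of $V_i$ over $\mathcal{K}$ — which follows from generic irreducibility, since irreducibility over $\overline{\mathcal{K}}$ implies irreducibility over $\mathcal{K}$. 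The claim for $b_n^{(i)}$ is symmetric, using the submodule spanned by the $v_m^{(i)}$ with $m\ge n$.

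Finally I would construct the isomorphism by rescaling the chosen bases. The set $S$ is nonempty (the families are nonzero), so fix $n_0\in S$, put $\lambda_{n_0}=1$, and define $\lambda_n\in\mathcal{K}^{\times}$ for the remaining $n\in S$ by propagating the rule $\lambda_{n+2}=\lambda_n\,a_n^{(2)}/a_n^{(1)}$ upward and downward along $S$; this is legitimate because the relevant $a_n^{(i)}$ are nonzero by the previous step. The $\mathcal{K}$-linear bijection $\phi\colon V_1\to V_2$, $\phi(v_n^{(1)})=\lambda_n v_n^{(2)}$, intertwines $E$ by construction and intertwines $H$ since it is weight-preserving; it intertwines $F$ because the required identity $\lambda_{n+2}=\lambda_n\,b_{n+2}^{(1)}/b_{n+2}^{(2)}$ is equivalent to $a_n^{(1)}b_{n+2}^{(1)}=a_n^{(2)}b_{n+2}^{(2)}$, which was verified above. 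Hence $\phi$ intertwines all of $\mathcal{K}\otimes_{\mathcal O}\boldsymbol{\mathfrak{g}}$, as required. The step that genuinely requires care is the non-vanishing argument: the proof rests on the fact that the gauge-invariant products $a_n b_{n+2}$ are pinned down by the Casimir, together with enough irreducibility to guarantee that the individual structure constants do not vanish, so that the rescaling actually exists.
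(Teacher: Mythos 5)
Your proof is correct and follows the same route as the paper: base change to $\mathcal{K}$, observe that $\mathcal{K}\otimes_{\mathcal{O}}\boldsymbol{\mathfrak{g}}\cong\mathfrak{sl}(2,\mathcal{K})$, and reduce to the classification of irreducible weight modules for $\mathfrak{sl}(2)$ over a field. The only difference is presentational — where the paper cites the standard $\mathfrak{sl}(2)$ classification, you carry it out explicitly (non-vanishing of structure constants via irreducibility, then rescaling), which fills in exactly the steps the paper leaves to the reader.
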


\begin{proof}
It is clear from the formulas \eqref{87} that 
 \[
 \mathcal{K}\otimes _{\mathcal O} \boldsymbol{\mathfrak{g}} \cong \mathfrak{sl}(2, \mathcal K).
 \]
Now from a generically irreducible quasi-admissible family of Harish-Chandra modules   for  $(\boldsymbol{\mathfrak{g}},\boldsymbol{K})$ we obtain  an    Harish-Chandra module for  $(\mathfrak{sl}(2, \mathcal K), K_{\mathcal K})$  (the underlying vector space of the module is a $\mathcal K$-vector space) such that 
\begin{enumerate}[\rm (i)]
\item the representations space  decomposes into integer weight spaces for the action of $  K_{\mathcal K}$, 
\item the Casimir for $\mathfrak{sl}(2,\mathcal K)$ acts as multiplication by an element of $\mathcal K$, and 
\item the representation is irreducible over the algebraic closure of $\mathcal K$.  
\end{enumerate}
The classification of such modules, up  to equivalence, is carried out exactly as in the standard case of complex representations of $(\mathfrak{sl}(2, \C), K_{\C})$ (and incidentally,  in the presence of (i) and (ii), irreducibility over $\mathcal K$ is equivalent to irreducibility over the algebraic closure). The result follows.
\end{proof}

\subsection{The sigma-twisted duals}
The theorem from the previous section allows us to characterize those generically irreducible and quasi-admissible families that are rationally isomorphic to their $\sigma$-twisted duals:

\begin{proposition}
Let  $\mathcal{F}$  be a quasi-admissible and generically irreducible family of Harish-Chandra modules for  $(\boldsymbol{\mathfrak{g}},{\boldsymbol{K}})$.  There is an isomorphism of  $\mathcal{K}\otimes _{\mathcal O} \boldsymbol{\mathfrak{g}}$-modules
\[
\mathcal{K} \otimes _{\mathcal O} \mathcal {F} \stackrel \cong \longrightarrow 
\mathcal{K} \otimes _{\mathcal O} \mathcal {F}^{\langle\sigma\rangle}
\]
if and only if the regular Casimir  function $c_{\mathcal F}$ on $\C\setminus \{ 0 \}$ is real-valued on $\R \setminus \{0\}$. 
\end{proposition}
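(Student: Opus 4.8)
The plan is to reduce the statement to the rational classification just established. By that proposition (and its evident converse, that a rational isomorphism preserves Casimir and weight invariants), $\mathcal{K}\otimes_{\mathcal{O}}\mathcal{F}$ and $\mathcal{K}\otimes_{\mathcal{O}}\mathcal{F}^{\langle\sigma\rangle}$ are isomorphic as $\mathcal{K}\otimes_{\mathcal{O}}\boldsymbol{\mathfrak{g}}$-modules exactly when $\mathcal{F}$ and $\mathcal{F}^{\langle\sigma\rangle}$ have the same Casimir invariant and the same weight invariant; note that $\mathcal{F}^{\langle\sigma\rangle}$ is again quasi-admissible and generically irreducible, since forming the contragredient, complex conjugation, and pullback along $\sigma_X$ all preserve those properties. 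So I would first check that the weight invariants always agree, and then compute $c_{\mathcal{F}^{\langle\sigma\rangle}}$ in terms of $c_{\mathcal{F}}$ and determine when the two coincide.

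For the weight invariant I would follow the $K$-types through the three operations making up $\mathcal{F}^{\langle\sigma\rangle}=\sigma_X^*\overline{\mathcal{F}^\dagger}$. Forming the $O_X$-linear dual sends the weight-$n$ isotypic subsheaf to the weight-$(-n)$ one. The explicit real structure of Section~\ref{sec-su(1,1)-family} gives $\sigma_{\boldsymbol{K}}\colon a\mapsto \overline a^{\,-1}$ on $K=\C^*$, and a short computation with this formula shows that the operation of complex-conjugating a $K$-module and then twisting by $\sigma_{\boldsymbol{K}}$ also replaces each weight $n$ by $-n$. Composing, the set of weights occurring in $\mathcal{F}^{\langle\sigma\rangle}$ equals that of $\mathcal{F}$, so the weight invariants match for every $\mathcal{F}$ and the whole content of the statement is about the Casimir. (As a sanity check, the $\sigma$-twisted dual of a lowest-weight family is therefore again a lowest-weight family, consistent with holomorphic discrete series being $\sigma$-self-dual.)

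For the Casimir I would show that the Casimir section acts on $\mathcal{F}^{\langle\sigma\rangle}$ as multiplication by the regular function $z\mapsto\overline{c_{\mathcal{F}}(\overline z)}$ on $\C\setminus\{0\}$ (concretely, if $c_{\mathcal{F}}=\sum_k a_k z^k$ then this is $\sum_k \overline{a_k}\,z^k$), proceeding in four steps along the definition $\mathcal{F}^{\langle\sigma\rangle}=\sigma_X^*\overline{\mathcal{F}^\dagger}$ with action twisted by $(\sigma_{\boldsymbol{\mathfrak{g}}},\sigma_{\boldsymbol{K}})$. First, $C=H^2+\tfrac{2}{z}EF+\tfrac{2}{z}FE$ is fixed by the anti-automorphism of the family of enveloping algebras used in forming the contragredient action (it acts by $-1$ on $H,E,F$ and reverses products, hence fixes $H^2$ and interchanges $EF$ with $FE$), so $C$ acts on $\mathcal{F}^\dagger$ again by $c_{\mathcal{F}}$. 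Second, passing to the complex conjugate family over $\overline X$ turns the eigenvalue function $c_{\mathcal{F}}$ into $\overline{c_{\mathcal{F}}}$, and third, pulling back along $\sigma_X\colon z\mapsto\overline z$ turns it into $z\mapsto\overline{c_{\mathcal{F}}(\overline z)}$. Fourth, twisting the action by $(\sigma_{\boldsymbol{\mathfrak{g}}},\sigma_{\boldsymbol{K}})$ leaves the answer unchanged, because $\sigma_{\boldsymbol{\mathfrak{g}}}$ --- being a morphism of families of Lie algebras that restricts to an isomorphism on each fiber --- carries the Casimir section of $\boldsymbol{\mathfrak{g}}$ to the Casimir section of $\sigma_X^*\overline{\boldsymbol{\mathfrak{g}}}$; one confirms this directly from the images $\sigma_{\boldsymbol{\mathfrak{g}}}(H)=-\overline H$, $\sigma_{\boldsymbol{\mathfrak{g}}}(E)=\overline F$, $\sigma_{\boldsymbol{\mathfrak{g}}}(F)=\overline E$ obtained from the formulas of Section~\ref{sec-su(1,1)-family}.

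Finally I would assemble the pieces. Combining the rational classification, the weight comparison, and the Casimir computation, $\mathcal{K}\otimes_{\mathcal{O}}\mathcal{F}\cong\mathcal{K}\otimes_{\mathcal{O}}\mathcal{F}^{\langle\sigma\rangle}$ as $\mathcal{K}\otimes_{\mathcal{O}}\boldsymbol{\mathfrak{g}}$-modules if and only if $c_{\mathcal{F}}(z)=\overline{c_{\mathcal{F}}(\overline z)}$ identically on $\C\setminus\{0\}$. Specializing to real $z=t\neq0$ forces $c_{\mathcal{F}}(t)\in\R$; conversely, if $c_{\mathcal{F}}$ is real-valued on $\R\setminus\{0\}$ then the regular functions $c_{\mathcal{F}}(z)$ and $\overline{c_{\mathcal{F}}(\overline z)}$ agree on the infinite set $\R\setminus\{0\}$ and therefore coincide. (For the ``only if'' direction one needs neither the weight comparison nor the classification: any $\mathcal{K}\otimes_{\mathcal{O}}\boldsymbol{\mathfrak{g}}$-isomorphism intertwines the action of $C$, which already gives $c_{\mathcal{F}}=c_{\mathcal{F}^{\langle\sigma\rangle}}$.) I expect the main obstacle to be the fourth step of the Casimir computation --- checking that the twist by $\sigma_{\boldsymbol{\mathfrak{g}}}$ fixes the Casimir section, and keeping the complex-conjugate-variety and pullback bookkeeping straight throughout --- with everything else being essentially formal given the earlier proposition.
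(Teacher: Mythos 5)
Your proposal is correct and follows exactly the paper's route: reduce to the preceding rational-classification proposition, observe that the $\sigma$-twisted dual has the same weights as $\mathcal F$, and show $c_{\mathcal F^{\langle\sigma\rangle}}(z)=\overline{c_{\mathcal F}(\overline z)}$, so the isomorphism exists iff $c_{\mathcal F}$ is $\sigma$-fixed, i.e.\ real-valued on $\R\setminus\{0\}$. The paper states the weight-equality and the Casimir-conjugation formula without proof, while you verify them directly (via $\sigma_{\boldsymbol K}(a)=\overline a^{-1}$, $\sigma_{\boldsymbol{\mathfrak g}}(H)=-\overline H$, $\sigma_{\boldsymbol{\mathfrak g}}(E)=\overline F$, $\sigma_{\boldsymbol{\mathfrak g}}(F)=\overline E$, and the symmetry of $C$ under the contragredient anti-automorphism), which is a useful amplification but not a different argument.
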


\begin{proof}
This follows from the proposition in the previous section, together with  the fact that the weights of $\mathcal {F}$ and $\mathcal {F}^{\langle\sigma\rangle}$ are equal, while 
\[
 c_{\mathcal {F}^{\langle\sigma\rangle}} = \sigma(c_{\mathcal F}),
\]
where $\sigma(c)(z) = \overline{c(\overline z)}$.
\end{proof}

\section{Families of unitary representations}
\label{sec-group-reps}

In this section we shall indicate how one can use  Jantzen filtration techniques to obtain families of unitary representations from quasi-admissible and generically irreducible families of Harish-Chandra modules.  The families will include both discretely- and continuously-parametrized parts.  In the next section we shall show that the families of unitary representations that may be obtained in this way include the contraction family that we described in the introduction.

\subsection{The Jantzen filtration}

Let $\mathcal{F} $ and $\mathcal {H}$ be quasi-admissible algebraic families of Harish-Chandra modules for $(\boldsymbol{\mathfrak{g}},\boldsymbol{K})$ and let 
\begin{equation}
\label{eq-simple-map}
\varphi \colon  \mathcal{F}  \longrightarrow  \mathcal{H}   
\end{equation}
be a morphism that is generically an isomorphism.  In this section we shall review the construction of canonical increasing and decreasing filtrations of the fibers of $\mathcal F $ and $\mathcal H$, respectively (the Jantzen filtrations; compare   \cite{Jantzen79,Humphreys08}).  We shall also recall that  a choice of   coordinate near any point determines isomorphisms between corresponding subquotients at that point. 

We shall continue to assume that  $\boldsymbol{K}$ is a constant family of reductive groups with fiber $K$, and we shall continue to think  of $\mathcal{F}$ and $\mathcal{H}$ as free  modules over the  algebra of regular functions on the line,  with compatible actions of $K$ and   the Lie algebra of global sections of $\boldsymbol{\mathfrak{g}}$.  
To be precise, rather than \eqref{eq-simple-map}  we shall start with a  map
\begin{equation}
\label{eq-less-simple-map}
\varphi \colon\mathcal{K}\otimes_{\mathcal{O}} \mathcal{F}  \longrightarrow \mathcal{K}\otimes_{\mathcal{O}}\mathcal{H}   
\end{equation}
that is a  $(\boldsymbol{\mathfrak{g}}, {K})$-equivariant isomorphism of $\mathcal{K}$-vector spaces, 
where  $\mathcal{O}$ is the algebra of regular functions on the line 
and  $\mathcal{K}$ is the field of rational functions.

Let $z$ be a point on the affine line, and denote by  $\mathcal{O}_z$ the localization of $\mathcal{O}$ at $z$,  comprised of those rational functions  whose denominators are nonzero at $z$. In addition form  the localizations
\[
\mathcal{F}_z = \mathcal{O}_z\otimes_{\mathcal{O}} \mathcal{F}  
\quad\text{and} \quad 
\mathcal{H}_z = \mathcal{O}_z\otimes_{\mathcal{O}} \mathcal{H}.
\]
We shall think of these as  $ \mathcal{O}_z$-submodules of  $\mathcal{K}\otimes_{\mathcal{O}} \mathcal{F} $ and $ \mathcal{K}\otimes_{\mathcal{O}}\mathcal{H}$, respectively.
 Fix a coordinate $p$ at $z$,  or in other words a degree-one polynomial function that vanishes at $z$, and for $n\in \mathbb Z$ define 
\[
\mathcal{F}_z^n = \{\,  f \in \mathcal{F}_z : \varphi(f) \in p^n \mathcal{H}_z\, \}
\]
and 
\[
\mathcal{H}_z^n = \{\,  h \in \mathcal{H}_z : p^n h \in\varphi[\mathcal{F}_z^n]\, \} .
\]
These constitute decreasing and increasing filtrations of $\mathcal F_z$ and $\mathcal H_z$, respectively, by $\mathcal{O}_z$-submodules. 

Consider the fibers of $\mathcal F$ and $\mathcal H$,
\[
\mathcal{F}\vert_z =  \C \otimes _\mathcal{O} \mathcal {F} = \C \otimes _{\mathcal{O}_z} \mathcal {F}_z
\quad \text{and} \quad 
\mathcal{H}\vert_z =  \C \otimes _\mathcal{O} \mathcal {H} = \C \otimes _{\mathcal{O}_z} \mathcal {H}_z ,
\]
which are complex vector spaces and $(\boldsymbol{\mathfrak{g}}\vert_z, K)$-modules.
Denote by 
\[
\mathcal{F}\vert_z^n\subseteq \mathcal{F}\vert_z
\] the image in the complex vector space $\mathcal{F}\vert_z$  of the morphism  
\[
 \C \otimes _\mathcal{O} \mathcal {F}_z^n  \longrightarrow  \C \otimes _\mathcal{O} \mathcal {F}_z , 
 \]
 and similarly  denote  by 
 \[
\mathcal{H}\vert_z^n\subseteq \mathcal{H}\vert_z
\] the image in $\mathcal{H}\vert_z$  of the morphism of complex vector spaces
\[
 \C \otimes _\mathcal{O} \mathcal {H}_z^n  \longrightarrow  \C \otimes _\mathcal{O} \mathcal {H}_z . 
 \]
 These are  decreasing and increasing filtrations of $\mathcal{F}\vert_z$ and $\mathcal{H}\vert_z$, respectively, by $(\boldsymbol{\mathfrak{g}}\vert_z, K)$-submodules.
 
We shall now obtain isomorphisms between  subquotients of these two filtrations.  The formula 
 \[
 f \longmapsto p^{-n}\varphi(f)
 \]
  defines an isomorphism of $\mathcal{O}_z$-modules 
from $
 \mathcal{F}_z^n$ to $ \mathcal{H}_z^n$ and induces a surjective morphism of $(\boldsymbol{\mathfrak{g}}\vert_z, K)$-modules
  \[
  \varphi^n \colon  \mathcal{F}\vert _z^n\longrightarrow \mathcal{H}\vert_z^n .
 \]

 \begin{proposition}
 \label{prop-jantzen-subquotients}
The above morphism $\varphi^n$ maps the subspace  $
 \mathcal{F}_z^{n+1}\subseteq  
 \mathcal{F}_z^n$ into the subspace $
 \mathcal{H}_z^{n-1}\subseteq  
 \mathcal{H}_z^n$
 and induces an isomorphism of $(\boldsymbol{\mathfrak{g}}\vert_z, K)$-modules
 \[
  \varphi^n \colon  \mathcal{F}\vert _z^n\big /\mathcal{F}\vert _z^{n+1}\stackrel \cong \longrightarrow \mathcal{H}\vert_z^n \big /\mathcal{H}\vert _z^{n-1}
 \]
 \textup{(}that depends  on the differential  of $p$ at $z$\textup{)}.
  \end{proposition}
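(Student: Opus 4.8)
The plan is to reduce everything to commutative algebra over the discrete valuation ring $\mathcal{O}_z$ and then run a short diagram chase. First I would localize: work over $\mathcal O_z$ with $\mathcal F_z\subseteq\mathcal K\otimes_{\mathcal O}\mathcal F$ and $\mathcal H_z\subseteq\mathcal K\otimes_{\mathcal O}\mathcal H$, on which $\varphi$ is the given $(\boldsymbol{\mathfrak g},K)$-equivariant $\mathcal K$-linear isomorphism. The equivariance part of the assertion is then essentially free: since $\boldsymbol{\mathfrak g}$ acts $\mathcal O_z$-linearly on $\mathcal F_z$ and $\mathcal H_z$ and $\varphi$ intertwines the actions, each of the conditions ``$\varphi(f)\in p^{n}\mathcal H_z$'' and ``$p^{n}h\in\varphi[\mathcal F_z^{n}]$'' cuts out a $(\boldsymbol{\mathfrak g},K)$-submodule, so every $\mathcal F|_z^{n}$ and $\mathcal H|_z^{n}$ is a $(\boldsymbol{\mathfrak g}|_z,K)$-submodule and every map induced by $\varphi$ between them is automatically a morphism of $(\boldsymbol{\mathfrak g}|_z,K)$-modules. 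The content is therefore the linear-algebra identification of the subquotients.

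For that I would record four elementary facts, each obtained by nothing more than cancelling powers of $p$ inside the ambient $\mathcal K$-vector spaces:
\begin{enumerate}[\rm (a)]
\item $\varphi[\mathcal F_z^{n}]=\varphi(\mathcal F_z)\cap p^{n}\mathcal H_z=p^{n}\mathcal H_z^{n}$; consequently $\psi_n:=p^{-n}\varphi$ restricts to an $\mathcal O_z$-module isomorphism $\mathcal F_z^{n}\stackrel{\sim}{\longrightarrow}\mathcal H_z^{n}$, whose reduction is the map $\varphi^{n}$.
\item the divisibility identities $\mathcal F_z^{n}\cap p\mathcal F_z=p\,\mathcal F_z^{\,n-1}$ and $\mathcal H_z^{n}\cap p\mathcal H_z=p\,\mathcal H_z^{\,n+1}$.
\item $\psi_n\bigl(\mathcal F_z^{\,n+1}\bigr)=p\,\mathcal H_z^{\,n+1}$ and $\psi_n\bigl(p\,\mathcal F_z^{\,n-1}\bigr)=\mathcal H_z^{\,n-1}$; the first is (a) with $n{+}1$ in place of $n$, and the second follows from $\psi_n=p^{-1}\psi_{n-1}$ on $\mathcal F_z^{\,n-1}$ together with (a) with $n{-}1$.
\item by (b) and the modular law, $\mathcal F|_z^{n}/\mathcal F|_z^{\,n+1}\cong\mathcal F_z^{n}/\bigl(\mathcal F_z^{\,n+1}+p\,\mathcal F_z^{\,n-1}\bigr)$ and $\mathcal H|_z^{n}/\mathcal H|_z^{\,n-1}\cong\mathcal H_z^{n}/\bigl(\mathcal H_z^{\,n-1}+p\,\mathcal H_z^{\,n+1}\bigr)$: indeed $\mathcal F|_z^{n}=(\mathcal F_z^{n}+p\mathcal F_z)/p\mathcal F_z$, so $\mathcal F|_z^{n}/\mathcal F|_z^{\,n+1}\cong\mathcal F_z^{n}/\bigl(\mathcal F_z^{n}\cap(\mathcal F_z^{\,n+1}+p\mathcal F_z)\bigr)$, and since $\mathcal F_z^{\,n+1}\subseteq\mathcal F_z^{n}$ the intersection equals $\mathcal F_z^{\,n+1}+(\mathcal F_z^{n}\cap p\mathcal F_z)$.
\end{enumerate}

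Then I would assemble: by (a) and (c) the isomorphism $\psi_n\colon\mathcal F_z^{n}\to\mathcal H_z^{n}$ carries $\mathcal F_z^{\,n+1}+p\,\mathcal F_z^{\,n-1}$ exactly \emph{onto} $\mathcal H_z^{\,n-1}+p\,\mathcal H_z^{\,n+1}$, hence descends to an isomorphism of the quotients computed in (d), which by (d) is precisely an isomorphism $\mathcal F|_z^{n}/\mathcal F|_z^{\,n+1}\stackrel{\sim}{\longrightarrow}\mathcal H|_z^{n}/\mathcal H|_z^{\,n-1}$; by construction it is the map induced by $p^{-n}\varphi$, i.e.\ by $\varphi^{n}$, and it is $(\boldsymbol{\mathfrak g}|_z,K)$-equivariant by the first paragraph. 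Moreover $\psi_n$ carries $\mathcal F_z^{\,n+1}$ into $p\,\mathcal H_z^{\,n+1}\subseteq p\mathcal H_z$, so its reduction sends the image $\mathcal F|_z^{\,n+1}$ of $\mathcal F_z^{\,n+1}$ to zero, and in particular $\varphi^{n}$ maps $\mathcal F|_z^{\,n+1}$ into $\mathcal H|_z^{\,n-1}$, which is the first claim. Finally, for the dependence on the differential of $p$: any other coordinate at $z$ is $p'=up$ with $u\in\mathcal O_z^{\times}$; it yields the same filtrations $\mathcal F_z^{\,\bullet}$, $\mathcal H_z^{\,\bullet}$ but replaces $\psi_n$ by $u^{-n}\psi_n$, and because $p$ annihilates the subquotient (so $\mathcal O_z$ acts on it through its residue field $\C$) the induced isomorphism is rescaled by the constant $u(z)^{-n}=\bigl((dp'/dp)|_z\bigr)^{-n}$; hence $\varphi^{n}$ depends on the coordinate only through $dp|_z$.

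The proof uses no deep input---it is entirely commutative algebra over a DVR---so the only thing to watch is the index bookkeeping between the two oppositely-shifted identities in (b): $\mathcal F_z^{n}\cap p\mathcal F_z=p\mathcal F_z^{\,n-1}$ lowers the index, $\mathcal H_z^{n}\cap p\mathcal H_z=p\mathcal H_z^{\,n+1}$ raises it, and it is exactly this asymmetry between the decreasing filtration of $\mathcal F$ and the increasing filtration of $\mathcal H$ that places the $n{+}1$ on the $\mathcal F$-side and the $n{-}1$ on the $\mathcal H$-side of the final isomorphism. Checking that $\psi_n$ really carries $\mathcal F_z^{\,n+1}+p\,\mathcal F_z^{\,n-1}$ onto $\mathcal H_z^{\,n-1}+p\,\mathcal H_z^{\,n+1}$ --- not merely into it --- via the precise computations in (c) is where the small amount of real work lies.
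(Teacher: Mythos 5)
Your proof is correct, and it reaches the same elementary conclusion as the paper's argument, but the organization is genuinely different. The paper reasons element by element: it decomposes a representative $f\in\mathcal F_z^n$ of a class in $\mathcal F\vert_z^{n+1}$ as $f=f_1+pf_2$ with $f_1\in\mathcal F_z^{n+1}$, observes that $f_2\in\mathcal F_z^{n-1}$, and then rewrites $p^{-n}\varphi(f)=p\cdot p^{-(n+1)}\varphi(f_1)+p^{-(n-1)}\varphi(f_2)$ to read off that the image lies in $\mathcal H\vert_z^{n-1}$; injectivity is done by a mirror-image element manipulation. You instead package the divisibility bookkeeping into module identities over the DVR $\mathcal O_z$ --- the key identities $\varphi[\mathcal F_z^n]=p^n\mathcal H_z^n$, $\mathcal F_z^n\cap p\mathcal F_z=p\mathcal F_z^{n-1}$, $\mathcal H_z^n\cap p\mathcal H_z=p\mathcal H_z^{n+1}$ --- and then invoke the modular law and the isomorphism theorem to exhibit both subquotients as quotients of $\mathcal F_z^n$, resp.\ $\mathcal H_z^n$, by submodules that $\psi_n=p^{-n}\varphi$ demonstrably interchanges. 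This buys you a proof that is all at once rather than two separate element chases, makes the surjectivity onto (not merely into) $\mathcal H_z^{n-1}+p\mathcal H_z^{n+1}$ completely transparent, and cleanly isolates why the only dependence on $p$ is through $dp\vert_z$; the paper's version is shorter on the page but leaves the exhaustion of the target as ``clear.'' One small remark worth keeping in mind, which your lemma (c) actually makes visible: $\psi_n$ sends $\mathcal F_z^n\cap p\mathcal F_z=p\mathcal F_z^{n-1}$ onto $\mathcal H_z^{n-1}$, which is \emph{not} contained in $\mathcal H_z^n\cap p\mathcal H_z=p\mathcal H_z^{n+1}$ in general, so the intermediate map ``$\varphi^n\colon\mathcal F\vert_z^n\to\mathcal H\vert_z^n$'' is only canonically defined after passing to the subquotient $\mathcal H\vert_z^n/\mathcal H\vert_z^{n-1}$ on the target; your module-level bookkeeping records this precisely, whereas the paper's phrasing glosses over it.
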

  
  The proof is a simple calculation, but for the convenience of the reader we shall give it  in a moment.  The proof shows that there is an isomorphism of subquotients, as above, whether or not the morphism $\varphi$ in \eqref{eq-less-simple-map} is assumed to be an isomorphism.   But the hypothesis that $\varphi$ is an isomorphism implies the following additional important fact:
  
  \begin{proposition}
   \label{prop-jantzen-subquotients2}
  If the morphism $\varphi \colon\mathcal{K}\otimes_{\mathcal{O}} \mathcal{F}  \to \mathcal{K}\otimes_{\mathcal{O}}\mathcal{H}$ is an isomorphism, then the filtrations $ \mathcal{F}\vert_z^\bullet $ and $ \mathcal{H}\vert_z^\bullet $ 
 are  {exhaustive}:  the intersections of the filtration spaces are   zero, while the unions are  $\mathcal{F}\vert_z$ and $\mathcal{H}\vert _z$, respectively.
  \end{proposition}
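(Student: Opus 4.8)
The plan is to reduce the statement, working one $K$-isotypic component at a time, to an elementary fact about a pair of commensurable lattices in a finite-dimensional vector space over the discrete valuation ring $\mathcal{O}_z$. Since $\varphi$ is $K$-equivariant and the submodules $\mathcal{F}_z^n$, $\mathcal{H}_z^n$ are cut out by $K$-invariant conditions, everything in sight---the modules $\mathcal{F}_z$, $\mathcal{H}_z$, their filtration submodules, and their fibers---decomposes according to the isotypical decompositions $\mathcal{F}=\bigoplus_\tau\mathcal{F}_\tau$ and $\mathcal{H}=\bigoplus_\tau\mathcal{H}_\tau$. By quasi-admissibility each $\mathcal{F}_\tau$ and $\mathcal{H}_\tau$ is locally free and finitely generated over $\mathcal{O}$, so $(\mathcal{F}_\tau)_z$ and $(\mathcal{H}_\tau)_z$ are free of finite rank over $\mathcal{O}_z$; and because $X$ is the affine line, $\mathcal{O}_z$ is a discrete valuation ring with uniformizer $p$ and residue field $\C$. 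I would therefore first establish exhaustiveness inside a fixed component and then assemble the global statement: $\bigcap_n\mathcal{F}\vert_z^n$ vanishes because it vanishes in each component, while any vector of $\mathcal{F}\vert_z$ is supported on finitely many $\tau$ and hence, since $\mathcal{F}\vert_z^\bullet$ is decreasing, lies in $\mathcal{F}\vert_z^n$ for $n$ the minimum of the relevant indices; likewise for $\mathcal{H}$.

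So fix $\tau$ and put $M=(\mathcal{F}_\tau)_z$ and $N=(\mathcal{H}_\tau)_z$. Using $\varphi$ to identify $\mathcal{K}\otimes_{\mathcal{O}}\mathcal{F}_\tau$ with $\mathcal{K}\otimes_{\mathcal{O}}\mathcal{H}_\tau$, I regard $M$ and $N$ as $\mathcal{O}_z$-lattices of full rank in one finite-dimensional $\mathcal{K}$-vector space $V$, with $\varphi$ the identity. The \emph{only} point at which the hypothesis that $\varphi$ is an isomorphism enters is here: it forces $M$ and $N$ to be commensurable, i.e.\ $p^aM\subseteq N\subseteq p^{-a}M$ for some integer $a\ge 0$. (Without it, $N$ could span a proper subspace of $V$, or sit inside $M$ up to no bounded power of $p$, and the filtration of $\mathcal{H}\vert_z$ would fail to exhaust.) With these identifications $\mathcal{F}_z^n=M\cap p^nN$; and since $p^nh\in p^nN$ automatically when $h\in N$, one has $\mathcal{H}_z^n=N\cap p^{-n}M$. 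The spaces $\mathcal{F}\vert_z^n$ and $\mathcal{H}\vert_z^n$ are the images of these lattices in $M/pM$ and $N/pN$ respectively.

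The conclusion is then read off from large $|n|$. For $n\ge a+1$ the inclusion $N\subseteq p^{-a}M$ gives $p^nN\subseteq pM$, so $\mathcal{F}_z^n\subseteq pM$ and $\mathcal{F}\vert_z^n=0$; for $n\le -a$ the inclusion $p^aM\subseteq N$ gives $M\subseteq p^nN$, so $\mathcal{F}_z^n=M$ and $\mathcal{F}\vert_z^n=\mathcal{F}\vert_z$. As $\mathcal{F}\vert_z^\bullet$ is decreasing, its intersection equals $\mathcal{F}\vert_z^n$ for $n\gg 0$, namely $0$, and its union equals $\mathcal{F}\vert_z^n$ for $n\ll 0$, namely $\mathcal{F}\vert_z$. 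The argument for $\mathcal{H}\vert_z^\bullet$ is the same with $M$ and $N$ interchanged and $n$ replaced by $-n$: for $n\ge a$ one gets $\mathcal{H}_z^n=N$ and $\mathcal{H}\vert_z^n=\mathcal{H}\vert_z$, while for $n\le -a-1$ one gets $\mathcal{H}_z^n\subseteq pN$ and $\mathcal{H}\vert_z^n=0$; since $\mathcal{H}\vert_z^\bullet$ is increasing, its union is $\mathcal{H}\vert_z$ and its intersection is $0$.

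The computation is routine; the one point that requires care---and the reason the reduction to isotypic components cannot be bypassed---is that the commensurability bound $a=a_\tau$ depends on $\tau$, so there is no single $n$ past which all filtration spaces have stabilized, and exhaustiveness must be deduced from the decomposition together with the finiteness of the support of each vector. If one prefers an entirely explicit picture, one may instead invoke the elementary-divisor normal form for the pair $(M,N)$: there is an $\mathcal{O}_z$-basis $e_1,\dots,e_r$ of $M$ and integers $a_1\le\dots\le a_r$ with $p^{a_1}e_1,\dots,p^{a_r}e_r$ a basis of $N$, after which $\mathcal{F}\vert_z^n$ is the span of the images of those $e_i$ with $a_i\le -n$ and $\mathcal{H}\vert_z^n$ the span of the images of those $p^{a_i}e_i$ with $a_i\ge -n$; exhaustiveness is then visibly the assertion that each component has only finitely many $a_i$.
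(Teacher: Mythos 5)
Your argument is correct and follows the same approach as the paper: reduce to a single $K$-isotypical component, where the localizations become commensurable finite-rank lattices over the discrete valuation ring $\mathcal{O}_z$, and read off exhaustiveness from the commensurability bound. The paper's own proof consists of exactly this reduction plus the remark that the one-summand case ``is easy''; you have supplied the details the authors leave implicit, including the worthwhile observation that assembling the per-summand bounds back into the global statement uses the finiteness of the $K$-support of each vector, since the bound $a_\tau$ genuinely depends on $\tau$.
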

  
  \begin{proof}
The modules $ \mathcal{F}_z$ and $ \mathcal{H}_z$ decompose  into  direct sums of free and finite rank submodules according to the $K$-isotypical decompositions of $\mathcal{F}$, and $ \mathcal{H}$ and the morphism $\varphi$ respects these decompositions.  So it suffices to prove the proposition for a single summand, where it is easy.
  \end{proof}
  
   \begin{proof}[Proof of the first proposition]
    If $f\in \mathcal{F}_z^n$, and if $f$ determines an element of the subspace $\mathcal{F}\vert_z^{n+1}\subseteq  \mathcal{F}\vert _z^n
 $,
 then we can decompose $f$ as 
 \[
 f =  f_1 +  p f_2 ,
 \]
 where $f_1 \in \mathcal{F}_z^{n+1} $ and $f_2\in \mathcal{F}_z$.  It follows from the formula
 \[
 f_2 = p^{-1} (f - f_1) 
 \]
that in fact $f_2\in \mathcal{F}^{n-1}_z$.  Consider now  the formula 
\[
p^{-n}\varphi(f) = p\cdot p^{-(n+1)} \varphi(f_1) + p^{-(n-1)}\varphi(f_2) .
\]
 The first term on the right lies in $p \cdot \mathcal{H}_z$ while the second term on the right  lies in $\mathcal{H}^{n-1}_z$.  So the image of the class of $f$ in $\mathcal{F}\vert_z^n$ under the morphism $\varphi^n$ lies in $ \mathcal{H}\vert_z^{n-1} $, as required.

Surjectivity of the induced map on quotient spaces  is clear.  As for injectivity, if an element of $\mathcal{F}\vert_x^n/ \mathcal{F}\vert_z^{n+1}$ is  represented by some $f\in \mathcal{F}_z^n$ and  maps under $\varphi^n$ to an element of $\mathcal{H}\vert_z^{n-1}$, then we may write 
 \[
 p^{-n}\varphi(f) = p^{-(n-1)} \varphi (f_1) + p h_1
 \]
 for some $f_1\in \mathcal{F}_z^{n-1}$ and some $h_1\in \mathcal{H}_z$.   But then
 \[
 \varphi(f - pf_1) =  p^{n+1} h_1 ,
 \]
 which implies that $f-pf_1 \in \mathcal{F}_z^{n+1}$, so that $f$ determines the zero element of $\mathcal{F}\vert_z^n/ \mathcal{F}\vert_z^{n+1}$, as required.
 \end{proof}

\subsection{The Jantzen filtration and real structures}

Let   $(\boldsymbol{\mathfrak{g}},\boldsymbol{K})$ be an algebraic family of  Harish-Chandra pairs over the complex affine line $\C$, as in the previous section,  with $\boldsymbol{K}$   a constant family of connected reductive groups with fiber $K$.   Assume that   $(\boldsymbol{\mathfrak{g}},\boldsymbol{K})$ is equipped with a real structure compatible with the standard real structure $z\mapsto \overline z $ on $\C$.   Let $\mathcal F$ be a quasi-admissible and generically irreducible algebraic family of Harish-Chandra modules for $(\boldsymbol{\mathfrak{g}},\boldsymbol{K})$, and  suppose that there is an isomorphism 
\[
\varphi \colon\mathcal{K}\otimes_{\mathcal{O}} \mathcal{F}  \longrightarrow \mathcal{K}\otimes_{\mathcal{O}}\mathcal{F}^{\langle\sigma \rangle}  .
\]
In this section we shall  show that the Jantzen subquotients $\mathcal{F}\vert _x ^n / \mathcal{F}\vert_x ^{n+1}$  may be equipped with canonical (up to real scalar factors)  nondegenerate hermitian forms that are $\sigma$-invariant for the actions of the   Harish-Chandra pairs   $(\boldsymbol{\mathfrak{g}} \vert _x , K ) $.

We shall identify  $\mathcal{F}^{\langle\sigma \rangle}$ with the space of functions from $\mathcal F$ to $\mathcal O$ that are conjugate $\mathcal O$-linear in the sense  that if $e\in \mathcal{F}^{\langle \sigma\rangle}$, $q\in \mathcal{O}$ and   $f \in \mathcal{F}$, then 
\[
e(qf) = \sigma(q)e(f) ,
\]
 where $\sigma(q)(z) = \overline{q(\overline{z})}$, and that vanish on all but finitely many of the $K$-isotypical summands of $\mathcal{F}$.  The $\mathcal{O}$-module structure is $(q\cdot e )(f)  = q \cdot e(f)$ and the $\boldsymbol{\mathfrak{g}}$-module structure is $(X\cdot e)(f) = - e(\sigma (X)f)$.
We obtain from the isomorphism $\varphi$  above  a  complex-sesquilinear map 
\[
\langle\,\,\,,\,\,\rangle\colon \left ( \mathcal{K}\otimes_{\mathcal{O}} \mathcal{F} \right ) \times \left (  \mathcal{K}\otimes_{\mathcal{O}} \mathcal{F} \right )  \longrightarrow \mathcal{K}
\]
using the formula 
$ \langle f_1,f_2\rangle = \varphi(f_1)(f_2)
$.
This map is in fact $\mathcal{K}$-sesquilinear in the sense that 
\[
\langle q_1f_1,q_2 f_2\rangle =  q_1   \langle f_1,f_2\rangle  \sigma(q_2),
\]
for all $f_1,f_2,\in \mathcal{K}\otimes_{\mathcal{O}} \mathcal{F} $ and all $q_1,q_2 \in \mathcal{K}$, where again $\sigma(q)(z) = \overline{q(\overline{z})}$. The pairing  is also $\sigma$-invariant  under the  $ \boldsymbol{\mathfrak {g}}$-action and the $\boldsymbol{K}$-action in the sense that
\[
\langle X  \cdot f _1 , f_2 \rangle  + \langle f_1,  \sigma(X)\cdot f_2 \rangle = 0 
\]
\[
\langle g  \cdot f _1 , \sigma_{\boldsymbol{K}} (g)  \cdot f_2 \rangle  =\langle f_1,  f_2 \rangle  
\]
for all $f_1,f_2,\in \mathcal{K}\otimes_{\mathcal{O}} \mathcal{F} $,  all $X\in \boldsymbol{\mathfrak {g}}$, and all $g$ that are global sections of $\boldsymbol{K}$.

By Schur's lemma, any other isomorphism 
\[
\psi \colon\mathcal{K}\otimes_{\mathcal{O}} \mathcal{F}  \longrightarrow \mathcal{K}\otimes_{\mathcal{O}}\mathcal{F}^{\langle\sigma \rangle}   
\]
is equal to $\varphi$ times a rational function $q\in \mathcal{K}$.  Note that the sesquilinear form associated to $\psi$ is 
$ q \cdot  \langle f_1,f_2\rangle$.
Returning to $\varphi$, it follows from this  that there is some $q \in \mathcal{K}$ such that 
\[
\sigma( \langle f_2,f_1\rangle ) = q \cdot \langle  f_1,  f_2\rangle
\]
for all $f_1,f_2,\in \mathcal{K}\otimes_{\mathcal{O}} \mathcal{F} $. This is  because the left-hand side is the $\mathcal{K}$-sesquilinear form  associated to the isomorphism 
\[
\psi\colon  \mathcal{K}\otimes_{\mathcal{O}} \mathcal{F}  \longrightarrow \mathcal{K}\otimes_{\mathcal{O}}\mathcal{F}^{\langle\sigma \rangle}  
\]
defined by 
$
\psi(f_1)(f_2) = \sigma( \langle f_2,f_1\rangle ) $.
Note that since $\sigma$ is an involution, 
\[
\sigma(q) \cdot q = 1 .
\]
Hence  the rational function $q$ has no zeros or poles on the real axis.

Now fix  $x\in \R$ and choose a real  coordinate $p$ at $x$ (meaning that   $\sigma(p) = p$).   The Jantzen construction of the previous section produces   nondegenerate, invariant, complex-sesquilinear forms
\[
\langle\,\,\,,\,\,\rangle_{x,n}\colon 
\mathcal{F}\vert_x^n/ \mathcal{F}\vert_x^{n+1} \times \mathcal{F}\vert_x^n/ \mathcal{F}\vert_x^{n+1} \longrightarrow \C .
\]
The space $\mathcal{F}\vert_x^n$ is determined by elements $f_1\in \mathcal{F}_x$ such that 
\[
\operatorname{ord}_x ( \langle f_1,f_2\rangle ) \ge n
\]
for every $f_2 \in \mathcal{F}_x$, and  the scalar  $ \langle f_1,f_2\rangle_{x,n}$ is equal to the value  of the function $ p^{-n} \langle f_1,f_2\rangle$ at $x$. 

Up to a shift in the index $n$, the quotient spaces $\mathcal{F}\vert_x^n/ \mathcal{F}\vert_x^{n+1}$,  are  independent of $\varphi$ and also independent of the choice of $p$.  Moreover up to a real scalar factor the above forms are also independent of the choice of  $p$.  Finally since 
\[
\overline{\langle f_2 , f_1 \rangle}_{x,n}  =  q(x) \cdot   { \langle f_1 , f_2 \rangle_{x,n}} ,
\]
where $|q(x)| = 1$, after rescaling the above forms by a square root of $q(x)$ we obtain, for all $n$,    Hermitian forms that are independent of all choices, up to real scalar factors.

So for each $x\in \R$ we have constructed    a canonical (finite) set of   $(\boldsymbol{\mathfrak{g}}\vert_x, K)$-modules (the Jantzen quotients).    Each is equipped with a canonical, up to a real scalar factor, nondegenerate Hermitian form.   All the forms are   $\sigma$-invariant for the action of $(\boldsymbol{\mathfrak{g}}\vert_x, K)$. 

\subsection{Distinguished parameter values}
\label{subsec-distinguished}

For all but a countable set of values $x$, the Jantzen filtration on $\mathcal{F}\vert_x$ will be trivial, so that there will be a unique Jantzen quotient at $x$, namely $\mathcal{F}\vert _x$ itself.  As a result the filtration immediately selects   a distinguished, discrete collection of parameter values, where there is   more than one Janzten quotient, and a corresponding discretely parametrized set of representations.  

Obviously if the fiber is an irreducible module, then the filtration must be trivial.  In the particular special case that  we shall analyze in the next section the distinguished values are precisely the reducibility points, but this need not be so in general.

\subsection{Infinitesimally unitary Jantzen quotients}

In this section we shall continue with the notation of the previous section; in particular we shall denote by $x$ a fixed element of  the real line. We shall consider the problem of integrating the Lie algebra representation of $\boldsymbol{\mathfrak {g}}\vert ^\sigma_x$ on a Jantzen quotient  $\mathcal{F}\vert_x^n/ \mathcal{F}\vert_x^{n+1}$ so as to obtain a unitary representation.  The arguments in this section no longer involve families; they just involve the hermitian forms constructed above.

\begin{defn}
We shall say that a Jantzen quotient $\mathcal{F}\vert_x^n/ \mathcal{F}\vert_x^{n+1}$ is \emph{infinitesimally unitary} if the    real one-dimensional space of   hermitian forms on $\mathcal{F}\vert_x^n/ \mathcal{F}\vert_x^{n+1}$ that was constructed in the previous section includes a   positive-definite hermitian form (that is, an inner product).
\end{defn}

Assume that $\mathcal{F}\vert_x^n/ \mathcal{F}\vert_x^{n+1}$ is infinitesimally unitary. Adjust the sesquilinear form $\langle\,\,\,,\,\,\rangle_{x,n}$ by a (uniquely determined) complex scalar factor, if necessary, so as to make it  an inner product.  
We can apply  the following theorem of Nelson  \cite[Theorem 5]{Nelson59}  to show that   the admissible $(\boldsymbol{\mathfrak {g}}\vert^\sigma _x, K^\sigma)$-module   $\mathcal{F}\vert_x^n/ \mathcal{F}\vert_x^{n+1}$ integrates to a unitary representation on the Hilbert space completion of the Jantzen quotient.
 
 \begin{theorem}
 Let $\mathfrak{g}$ be a real, finite-dimensional Lie algebra of skew-symmetric operators acting on a common invariant and dense domain $\mathcal F$ in a Hilbert space $H$.  Let $X_1,\dots, X_k$ be a basis for $\mathfrak{g}$ and form the symmetric operator
 \[
 \Delta = X_1 ^2 + \cdots + X_k^2 
 \]
 with domain $\mathcal F$.  If $\Delta$ is essentially self-adjoint \textup{(}that is, if $\Delta$ has a unique self-adjoint extension\textup{)}, then each $X_j$ is essentially skew-adjoint, and if $G$ is the simply connected Lie group associated to $\mathfrak {g}$, then there is a unique unitary representation 
 \[
 \pi\colon G \longrightarrow U(H)
 \]
for which the generators of the one-parameter unitary groups $\{ \pi(\exp(tX_j))\} _{t\in \R}$ are the skew-adjoint extensions of the operators $X_j$.
 \end{theorem}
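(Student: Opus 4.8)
This is a theorem of Nelson \cite[Theorem~5]{Nelson59}, so rather than reprove it I shall only indicate the shape of the argument. The plan is to manufacture a dense supply of vectors that are simultaneously \emph{analytic} for the whole Lie algebra $\mathfrak g$, and then to integrate the representation on them.

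First I would note that, since each $X_j$ is skew-symmetric, $-\Delta$ is a nonnegative symmetric operator on $\mathcal F$, because $\langle \Delta v, v\rangle = -\sum_j\|X_jv\|^2\le 0$. By hypothesis $\Delta$ is essentially self-adjoint, so its closure $\bar\Delta$ generates a strongly continuous self-adjoint contraction semigroup $\{e^{t\bar\Delta}\}_{t\ge0}$, with $e^{t\bar\Delta}v\to v$ as $t\to0^+$ for every $v\in H$. The technical heart of the proof is an a priori estimate of the form
\[
\bigl\|X_{i_1}\cdots X_{i_n}w\bigr\|\ \le\ C^{\,n}\sqrt{n!}\,\bigl\|(1-\Delta)^{n/2}w\bigr\|,
\]
with $C$ depending only on $k$ and the structure constants of $\mathfrak g$. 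It is proved by induction on $n$, using the bracket relations $[X_i,X_j]\in\mathfrak g$ to move factors past one another and to rewrite degree-$n$ monomials in terms of $\Delta$ and lower-degree terms. It is precisely here that essential self-adjointness of $\Delta$ enters: the right-hand side is controlled by the spectral calculus of $\bar\Delta$, which for heat-smoothed vectors gives $\|(1-\bar\Delta)^{n/2}e^{t\bar\Delta}v\|\le\|v\|\sup_{\lambda\ge0}(1+\lambda)^{n/2}e^{-t\lambda}$, a quantity growing no faster than $D(t)^{\,n}n^{n/2}$. Combining the two bounds, the series $\sum_n\frac{\rho^n}{n!}\|Y^ne^{t\bar\Delta}v\|$ converges for $\rho$ small and $Y$ ranging over a fixed neighbourhood of $0$ in $\mathfrak g$; thus every $e^{t\bar\Delta}v$ with $t>0$ is an analytic vector for the representation.

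Granting this, the rest is routine. The span $\mathcal D$ of the vectors $e^{t\bar\Delta}v$ ($t>0$, $v\in H$) is dense and consists of analytic vectors; Nelson's analytic-vector theorem (a symmetric operator with a dense set of analytic vectors is essentially self-adjoint, applied to $iX_j$) then shows that each $X_j$ is essentially skew-adjoint, so its closure $\bar X_j$ generates a one-parameter unitary group. For $Y$ near $0$ in $\mathfrak g$ and $v\in\mathcal D$ one defines $\pi(\exp Y)v$ by the convergent exponential series for $\sum_jy_j\bar X_j$ and extends by holomorphy; the Baker--Campbell--Hausdorff formula together with the analyticity shows that $\pi$ is a unitary local homomorphism near the identity of $G$. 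Since $G$ is connected and simply connected, the monodromy theorem extends $\pi$ uniquely to a strongly continuous unitary representation $\pi\colon G\to U(H)$, and by construction the generator of $t\mapsto\pi(\exp tX_j)$ is the skew-adjoint closure of $X_j$. Uniqueness is immediate: two such representations agree on each subgroup $\exp(\R X_j)$, hence on the subgroup they generate, which is all of connected $G$.

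The one genuine obstacle is the a priori estimate of the second paragraph — the combinatorial bound of non-commuting monomials in the $X_j$ by powers of the single operator $\Delta$, coupled with the spectral bound on $e^{t\bar\Delta}$. This is the content of Nelson's paper, and it is the only place where the essential self-adjointness hypothesis really matters; once the heat-smoothed vectors are known to be analytic for $\mathfrak g$, the passage to a local homomorphism and then, via simple connectivity, to a global unitary representation is standard.
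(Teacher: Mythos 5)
The paper does not prove this statement---it is quoted verbatim as \cite[Theorem~5]{Nelson59} and invoked as a black box, so there is no ``paper's proof'' to compare against. You correctly recognize this, and your sketch is a faithful, high-level outline of Nelson's actual argument: heat-smoothing via $e^{t\bar\Delta}$, the combinatorial a priori bound on non-commuting monomials $X_{i_1}\cdots X_{i_n}$ in terms of powers of $1-\Delta$, the resulting density of vectors analytic for all of $\mathfrak g$ simultaneously, essential skew-adjointness of each $X_j$ by the analytic-vector criterion, integration to a local homomorphism via the exponential series and BCH on analytic vectors, and finally extension over the simply connected $G$ by monodromy, with uniqueness coming from the fact that $\exp(\R X_j)$ generates $G$. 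You are also right that the essential self-adjointness of $\Delta$ enters precisely to make the spectral estimate $\|(1-\bar\Delta)^{n/2}e^{t\bar\Delta}v\|\lesssim D(t)^n n^{n/2}\|v\|$ available, and that the combinatorial estimate is the real content; everything else is standard. The sketch is appropriate in scope for a cited external theorem.
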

 
 In order to apply Nelson's theorem we shall assume that there is an internal  vector space direct sum decomposition 
 \[
  \boldsymbol{\mathfrak{g}}\vert _{x}^{\sigma} =   {\mathfrak{k}}^{\sigma} \oplus   \mathfrak{p}_{x} ^{\sigma}
  \]
  in which $ \mathfrak{p}_{x}^{\sigma} $ is a real subspace of $  \boldsymbol{\mathfrak{g}}\vert _{x}^{\sigma}$ that is invariant under the action of $K^\sigma$.  We shall assume that both $  {\mathfrak{k}}^{\sigma} $ and $ {\mathfrak{p}}_x ^{\sigma}$ carry $K^\sigma$-invariant inner products.  This is certainly the case in our   example. Indeed it is the case whenever the real group $K^\sigma$ is compact.

Fix orthonormal bases of  $  {\mathfrak{k}}^{\sigma} $ and $ {\mathfrak{p}}_x^{\sigma} $ with respect to the $K^\sigma$-invariant inner products, and combine them to form a basis $\{ X_j\}$ for  $\boldsymbol{\mathfrak{g}}\vert_{x}^{\sigma}$. The operator 
\[
\Delta  \colon \mathcal{F}\vert_x^n/ \mathcal{F}\vert_x^{n+1}
\longrightarrow 
\mathcal{F}\vert_x^n/ \mathcal{F}\vert_x^{n+1}
\]
in Nelson's theorem is symmetric.  It is also $K^\sigma$-invariant, and so it   leaves invariant  the $K^\sigma$-isotypical summands in $\mathcal{F}\vert_x^n/ \mathcal{F}\vert_x^{n+1}$, which are finite-dimen\-sional.  This implies that $\Delta$ is essentially self-adjoint, as required.

We conclude that the representation of $ \boldsymbol{\mathfrak{g}}\vert_{x}^{\sigma}$ on each infinitesimally unitary Jantzen quotient integrates to a unitary representation of the universal cover of $ \boldsymbol{{G}}\vert _{x}^{\sigma}$ on the Hilbert space 
 completion of the Jantzen quotient.
 
By definition of a $(\boldsymbol{\mathfrak {g}}\vert^\sigma _x, K^\sigma)$-module, we also know that the action of $\mathfrak{k}^\sigma\subseteq \boldsymbol{\mathfrak{g}}\vert_{x}^{\sigma}$ integrates to a unitary action of $K^\sigma$.  So if the inclusion of $K^\sigma$ into $\boldsymbol{  {G}}^\sigma\vert _x$ induces a surjection 
\[
\pi_1 (K^\sigma) \longrightarrow \pi_1 (\boldsymbol{  {G}}\vert _x^\sigma) ,
 \]
 then in fact the above unitary representation of the universal covering group of $ \boldsymbol{{G}}\vert_{x}^{\sigma}$ factors through $ \boldsymbol{{G}}\vert_{x}^{\sigma}$.  This is the case in our example (although it is not the case in all the examples in \cite{Ber2016} constructed from symmetric pairs of reductive groups).

 \section{Contraction families}

In this final section we shall determine the infinitesimally  unitary Jantzen quotients associated to a  family of quasi-admissible and generically irreducible modules for  the   family of Harish-Chandra paris $(\boldsymbol{\mathfrak{g}}, \boldsymbol{K})$ described in Section~\ref{sec-su(1,1)-family}.   We shall recover in this way the contraction family of unitary representations described in the introduction.  

The infinitesimally unitary Jantzen quotients are shown in Figure~\ref{fig:contraction-family}.  The quotients include both a continuous family of unitary principal series representations for $SU(1,1)$ and  a discrete sequence of (finite-dimensional) irreducible representations for $SU(2)$. The figure  gives some sense of the convergence phenomenon that is of interest in the theory of contractions.

 \begin{figure}[ht] 
  \centering
  \includegraphics[scale=0.20]{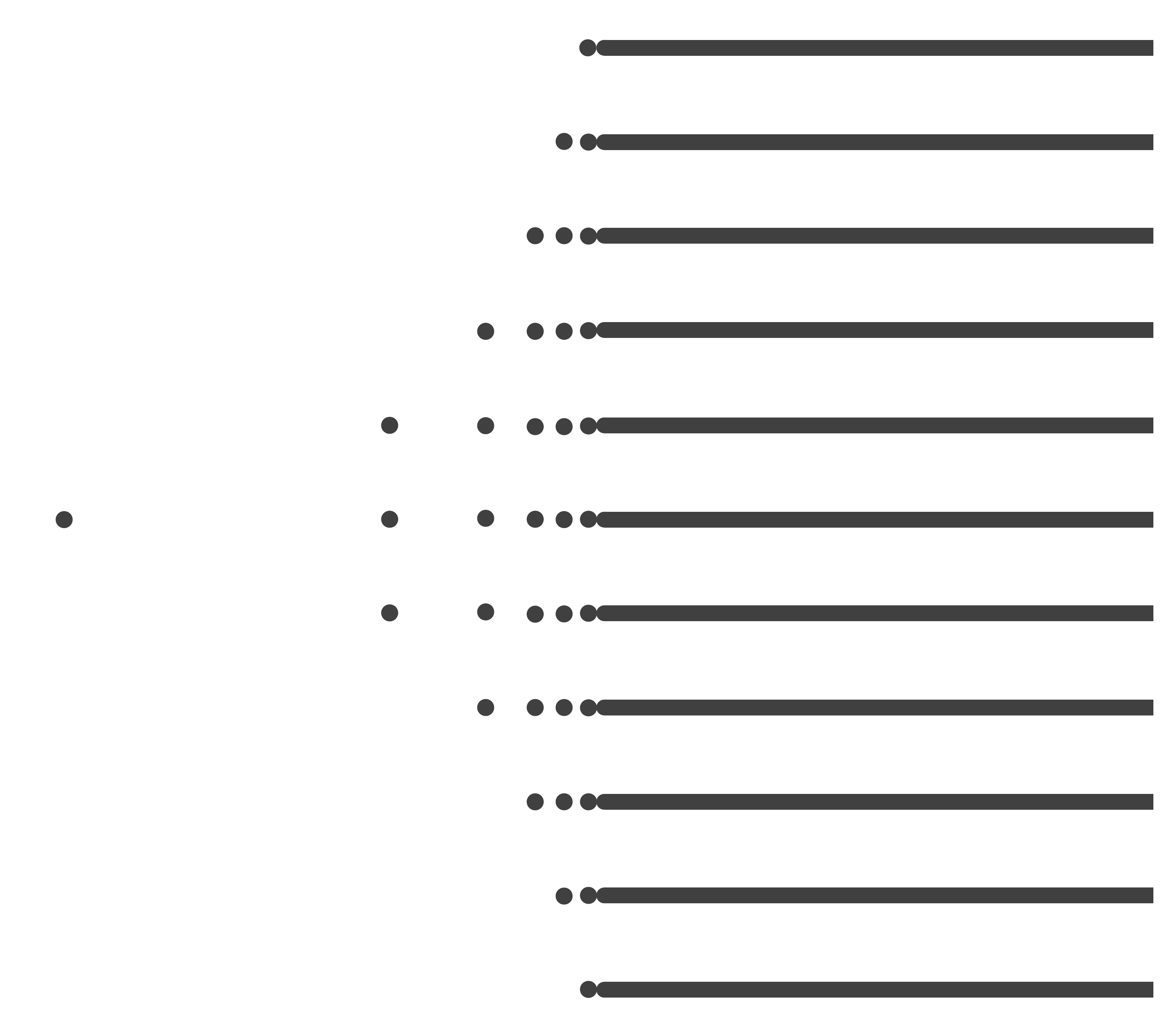} 
  \captionsetup{width=0.8\textwidth}
  \caption{\small  The diagram illustrates the  infinitesimally unitary Jantzen quotients for the unique  family with Casimir $ - ({1+x})/{x}$ that is generated by its $0$-weight space.  The real parameter $x$ runs horizontally.  At every $x$, the weights that appear in a unitary Jantzen quotient of $\mathcal F\vert _x$  are shown vertically.
   For $x<0$ there is a unitary quotient  for a sequence of values   $x = -(2m+1)^{-2}$ converging to zero, giving    finite-dimensional unitary representations of   $SU(2)$. At  $x=0$  there is an irreducible unitary representation of the Cartan motion group that includes every even weight.  When 
  $x>0$ every even weight occurs once agian, and the representations are the unitary spherical principal series for $SU(1,1)$ with Casimir converging to infinity as $x$ converges to zero. }
  \label{fig:contraction-family}
\end{figure}

\subsection{Bases for  generically irreducible families}\label{canonical bases}

Let  $(\boldsymbol{\mathfrak{g}},{\boldsymbol{K}})$ be the algebraic family of Harish-Chandra pairs from Section~\ref{sec-su(1,1)-family}.  For simplicity, from here on we shall be calculating with those families    that have  non\-zero $K$-isotyopical  components precisely for every even weight (recall that $K\cong \C^{\times}$).   These are the families that are relevant to the contraction families of unitary representations mentioned in the introduction; however the other possible types of families are listed in \cite{Ber2016} and they can be analyzed in a similar way. 

Let $\mathcal F$ be such a family.  As a consequence of our assumption, the $\mathcal{O}$-module  $\mathcal{F}$ has a basis $\{ f_k\}$ indexed by the even integers $k$, with 
\begin{align*} 
Hf_k &=k f_{k} 
\\  
 Ef_k &=A_k f_{k+2}\\  
 Ff_{k+2}&=B_{k}  f_{k}
\end{align*}
where $A_k, B_{k}\in \mathcal O$.  Here $H$, $E$ and $F$ are the sections of $\boldsymbol{\mathfrak{g}}$ defined  in Section~\ref{subsec-casimir}.  The Casimir from Section~\ref{subsec-casimir} acts on $f_n$ as multiplication by the polynomial 
\[
k^2+2k+\frac{4}{z}A_k B_k  
\]
(so this expression is independent of $k$).

Let us now make a further assumption:   that $\mathcal F$ is generated as a family of Harish-Chandra modules by its $0$-isotypical component $\mathcal F_0$.\footnote{Other natural choices are possible, and are described in \cite[4.9]{Ber2016}, but for brevity we shall focus on just this one example here.} Then $A_k$ is nowhere zero for $k\ge 0$.  This is because the isotypical component $\mathcal F_{k+2}$ is generated as a $\mathcal O$-module by the element
\[
A_kA_{k-2} \cdots A_0 f_{k+2} .
\]
Similarly, $B_k$ is nowhere zero for $k<  0$.  Hence 
\[
A_k = \mathrm{constant}  , \quad \text{ when $k\ge 0$}
\]
and 
\[
B_k = \mathrm{constant}  , \quad \text{ when $k <  0$} ,
\]
 where the constants  might be distinct, but are all nonzero.  
 
 In fact after adjusting the basis elements $f_k$ by scalar factors we can, and will from now on,   assume that all the above constants are $1$.   When that is done, the value of the Casimir section on $f_k$  becomes
 \[
k^2+2k+\frac{4}{z} B_k   , \quad \text{ when $k\ge 0$}
\]
and 
 \[
k^2+2k+\frac{4}{z} A_k   , \quad \text{ when $k<  0$} .
\]
So we see that the remaining $A_k$ and $B_k$ are completely determined by the action of the Casimir section.  Hence:

\begin{proposition}
Let $\mathcal{F} $ be a  quasi-admissible and generically irreducible algebraic family  of Harish-Chandra modules for $(\boldsymbol{\mathfrak{g}}, \boldsymbol{K})$.  Assume that  the weight $k$ isotypical summands of $\mathcal {F}$ are nonzero precisely for the even integer weights. If $\mathcal{F}$ is generated by its  weight $k$ isotypical summand,
then $\mathcal {F}$ is determined up to isomorphism by the action of the Casimir section. \qed
\end{proposition}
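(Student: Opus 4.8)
The plan is to reduce $\mathcal{F}$ to the combinatorial data of its structure constants relative to a weight basis, to observe that the normalization forced by the generation hypothesis expresses all of these constants as polynomials in the Casimir function, and then to note that two families with the same Casimir function have literally the same structure constants and so are isomorphic via the obvious map between bases. First I would set up the basis, as in the discussion preceding the proposition: since $\boldsymbol{K}\cong\C^{\times}$ is constant and reductive, quasi-admissibility makes each even-weight isotypical summand $\mathcal{F}_k$ a finitely generated locally free $\mathcal{O}$-module, hence free, and by multiplicity one (Section~\ref{sec-twisted-duals}) each $\mathcal{F}_k$ is free of rank one; picking a generator $f_k$ for every even $k$ gives $Hf_k=kf_k$, $Ef_k=A_kf_{k+2}$, $Ff_{k+2}=B_kf_k$ with $A_k,B_k\in\mathcal{O}$, and generic irreducibility forces every $A_k$ and $B_k$ to be nonzero (otherwise a half-infinite span of weight vectors would be a proper invariant subspace).

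The second step is the Casimir identity. Centrality of the Casimir section $C$ makes it act on $\mathcal{F}_k$ as a scalar $c_k\in\mathcal{K}$; commuting $C$ past $E$ and cancelling $A_k\neq 0$ gives $c_{k+2}=c_k$, so all $c_k$ equal a single function $c_{\mathcal{F}}$, and evaluating $Cf_k$ from $C=H^2+2H+\tfrac{4}{z}FE$ yields $A_kB_k=\tfrac{z}{4}\bigl(c_{\mathcal{F}}-k^2-2k\bigr)$. The third step uses the generation hypothesis: if $\mathcal{F}$ is generated as a family of Harish-Chandra modules by its weight-$k_0$ summand (the text treats $k_0=0$), then for $k\geq k_0$ the rank-one module $\mathcal{F}_{k+2}$ is generated over $\mathcal{O}$ by $A_kA_{k-2}\cdots A_{k_0}f_{k+2}$, which forces that partial product --- hence each $A_k$ with $k\geq k_0$ --- to be a unit of $\C[z]$, i.e.\ a nonzero constant; applying $F$ repeatedly does the same for the $B_k$ with $k<k_0$. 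Rescaling the $f_k$ by nonzero scalars, I arrange $A_k=1$ for $k\geq k_0$ and $B_k=1$ for $k<k_0$; the identity of the second step then reads $B_k=\tfrac{z}{4}(c_{\mathcal{F}}-k^2-2k)$ for $k\geq k_0$ and $A_k=\tfrac{z}{4}(c_{\mathcal{F}}-k^2-2k)$ for $k<k_0$, so after this normalization every structure constant is an explicit polynomial in $c_{\mathcal{F}}$.

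Finally, given two such families $\mathcal{F}_1,\mathcal{F}_2$ with the same Casimir function, I would normalize weight bases $\{f_k^{(1)}\}$, $\{f_k^{(2)}\}$ as above; by the third step the families share the same constants $A_k,B_k$, so the $\mathcal{O}$-linear bijection $f_k^{(1)}\mapsto f_k^{(2)}$ preserves weights and intertwines the actions of $H$, $E$, $F$. Since $H$, $E$, $F$ form an $\mathcal{O}$-module basis of $\boldsymbol{\mathfrak{g}}$, this map is $\boldsymbol{K}$-equivariant and intertwines the full $\boldsymbol{\mathfrak{g}}$-action, hence is an isomorphism of algebraic families of Harish-Chandra modules for $(\boldsymbol{\mathfrak{g}},\boldsymbol{K})$.

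I expect no serious obstacle: the argument is essentially an assembly of the computations already made just above the proposition. The step that warrants the most care is the inference, in the third step, that generation by $\mathcal{F}_{k_0}$ makes the partial products of the $A_k$ (and of the $B_k$) units --- this rests on the rank-one freeness of the isotypical summands together with the fact that the units of $\C[z]$ are exactly the nonzero constants --- and one should not overlook in the last step the observation that matching the three sections $H$, $E$, $F$ is enough precisely because they are an $\mathcal{O}$-basis of $\boldsymbol{\mathfrak{g}}$.
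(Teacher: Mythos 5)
Your proposal is correct and follows essentially the same line as the paper's own discussion preceding the proposition: pick the rank-one weight basis, use the generation hypothesis and the fact that units of $\C[z]$ are the nonzero constants to force $A_k$ (resp.\ $B_k$) to be constant on one side of the generating weight, normalize to $1$, and then read off the remaining structure constants from the Casimir identity $A_kB_k=\tfrac{z}{4}\bigl(c_{\mathcal F}-k^2-2k\bigr)$. The only additions beyond the paper are the explicit handling of a general generating weight $k_0$ (the paper treats $k_0=0$ and relegates the rest to a footnote) and the routine final step exhibiting the isomorphism on bases, both of which are straightforward completions rather than a different route.
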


Conversely, a family of the type described in the proposition may be constructed for which the action of the Casimir section is by any given regular function on $\C\setminus \{0\}$, other than one of the constants   $k^2 + 2k$,  that has at most a simple pole at $0$   (a family can be constructed for the excluded constants, too, but it is not generically irreducible). See  \cite{Ber2016} for this and for further information on classification of quasi-admissible and generically irreducible algebraic families of modules for $(\boldsymbol{\mathfrak{g}}, \boldsymbol{K})$.

\begin{remark}
Similar  families are considered in \cite[Section14]{AdamsEtAl2015}. The concern there is   with \emph{constant} families of Harish-Chandra pairs, but in applying  Jantzen filtration techniques to our families we shall certainly be following the lead of  \cite{AdamsEtAl2015}.   \end{remark}

\subsection{Computation of the Jantzen quotients}

We shall work in this section with a family $\mathcal F$ as in the previous proposition, and we shall assume in this section that  there is an isomorphism 
\[
\varphi \colon\mathcal{K}\otimes_{\mathcal{O}} \mathcal{F}  \longrightarrow \mathcal{K}\otimes_{\mathcal{O}}\mathcal{F}^{\langle\sigma \rangle}   .
\]
This is so precisely when the value of the Casimir section is fixed by the involution $\sigma$.

The  $\sigma$-twisted dual  $\mathcal{F}^{\sigma}$  has the same weights as $\mathcal{F}$, namely a one-dimen\-sional $k$-isotyopical summand for every even integer $k$.   If we choose  basis elements $e_k\in \mathcal{F}_k^{\langle\sigma\rangle}$ such that $e_k(f_k) = 1$, then 
\begin{eqnarray*}
&&He_k=ke_{k} 
\\ 
&&Ee_k=-\sigma(B_{k})e_{k+2}\\ 
&&Fe_k=-\sigma(A_{k-2}) e_{k-2} .
\end{eqnarray*}
The isomorphism $\varphi$  can be described by a sequence of formulas
\[
\varphi (f_k) = \varphi_k e_k ,
\]
where  $\varphi_k\in \mathcal{K}$. The rational functions $\varphi_k$ are not independent of one another, since compatibility with the action of $E$ and $F$ implies that 
\begin{eqnarray*}
&&A_{k-2} \varphi_{k} =-\sigma(B_{k-2})\varphi_{k-2} \\
&&B_k \varphi_k=-\sigma (A_k) \varphi_{k+2}
\end{eqnarray*}
for all even integers $k$.  These relations imply in turn that 
\[
\varphi_{k} =(-1)^{\frac k2}  \frac{B_{k-2}}{\sigma(A_{k-2})} \frac{B_{k-4}}{\sigma(A_{k-4})}  \cdots 
\frac{B_{0}}{\sigma(A_{0})}  \varphi_0 \quad \text{if $k>0$}
\]
and that 
\[
\varphi_{k} = (-1)^{\frac k2}   \frac{\sigma(A_{k})}{B_{k}} \frac{\sigma(A_{k+2})}{B_{k+2 }}  \cdots 
\frac{\sigma(A_{-2})}{B_{-2}}   \varphi_0 \quad \text{if $k< 0$} .
\]
If we are working, as we may,  with a basis  $\{ f_n\}$ for $\mathcal{F}$ for which 
$
A_k = 1 $ when $k\ge 0$ 
and 
$
B_k = 1$  when $k <  0$,
then the formulas simplify to
\[
\varphi_{k} =(-1)^{\frac k2}   {B_{k-2}}  {B_{k-4}}   \cdots 
 {B_{0}}   \varphi_0 \quad \text{if $k>0$}
\]
and that 
\[
\varphi_{k} = (-1)^{\frac k2}     A_{k}   A_{k+2}   \cdots 
 A_{-2}    \varphi_0 \quad \text{if $k< 0$} .
\]
Once again, each of the $A$- or $B$-polynomials appearing in these formulas is explicitly determined by the value of the Casimir section.

Let us now consider the specific example where the action of the Casimir is given by the regular function   
\[
c_{\mathcal F}(z)=   - \frac{1+z}{z}
\]
on $\C\setminus \{ 0\}$ (this is    choice of Casimir that will lead to the contraction families from mathematical physics that were mentioned in the introduction).   We want to determine all the infinitesimally unitary Jantzen quotients.

If we work with the isomorphism $\varphi$ for which $\varphi_0 =1$, then  we  get 
\[
B_k 
= - \frac{ 1}{4} \left (   1 +  x(1+k)^2   \right ) 
\]
when $k\ge 0$ and hence 
\[
\varphi_{k }(x)  = \frac{1 + (k{-}1)^2 x }{4}\cdot   \frac{1 + (k{-}3)^2 x }{4} \cdot \cdots \cdot  \frac{1+ 9x }{4} \cdot \frac{1+x}{4} .
\]
The same formula holds when    $k<0$.  So unless $x =  -(2 m+1)^2$, for some $m$, all     the scalars $\varphi_k(x)$ are nonzero, and hence the Jantzen filtration is trivial.  

On the other hand, if $x =  -(2 m+1)^2$, then we find that 
\[
\operatorname{ord}_x ( \langle f_k, f_k \rangle)  =  \operatorname{ord}_x ( \varphi_k)  =
	\begin{cases}
	1 & |k| > 2m  \\ 0 &  |k |\le 2m  ,
	\end{cases}
\]
and as 
  a result 
  \[
 \mathcal{F}|_{x}^n  =
	\begin{cases}
  \mathcal{F}\vert _x & n \ge 0\\
    ( \cdots\oplus  \C_{-2m-4} \oplus  \C_{-2m-2} )  \,   \oplus \, (  \C_{2m+2} \oplus \C_{2m+ 4} \oplus \cdots )  & n =1\\
0  &  n >1  
	\end{cases}
\]
So if $x =  -(2 m+1)^2$, then the Jantzen filtration is nontrivial. The set of distinguished points from Section~\ref{subsec-distinguished} is therefore precisely this set of values.  (These are also the $x$ for which the   fibers $\mathcal{F}\vert _x$ are reducible, as we mentioned in Section~\ref{subsec-distinguished}.)

Let us now discuss infinitesimal unitarity. The scalars $\varphi_k(x)$ are positive if $x\ge 0$, and from this it follows that the  nonzero Jantzen quotient is infinitesimally unitary for these values.  If $x$ is negative but not of the form  $x =  -(2 m+1)^2$, then all    the $\varphi_k(x)$ are real and nonzero, but they are not all of the same sign, so the quotient is not infinitesimally unitary. The  nonzero Jantzen quotients at $x=-(2m+1)^2 $ are 
\[
 \mathcal{F}|_{x}^n  /\mathcal{F}|_{x}^{n+1}\cong 
	\begin{cases}
  \C_{-2m} \oplus \C_{-2 m +2} \oplus \cdots  \oplus \C_{2m} & n =0\\
    ( \cdots\oplus  \C_{-2m-4} \oplus  \C_{-2m-2} )  \,   \oplus \, (  \C_{2m+2} \oplus \C_{2m+ 4} \oplus \cdots )  & n =1 .\\
	\end{cases}
\]
The $n=0$ quotient is infinitesimally unitary; the other one is not.

 This completes our computations  of the Jantzen quotients.  In summary we find that: 

\begin{enumerate}[\rm (a)]

\item For $x>0$ the     unitary representation  of $\boldsymbol{G}|^{\sigma}_{x}\cong SU(1,1)$ on the (completion of the)  unique  infinitesimally unitary  Jantzen quotient  is the unitary spherical principal series representation with  Casimir $-(x +{1})/x$.
As $x$ varies these representations   exhaust    the spherical unitary principal series except for the base of the spherical principal series.\footnote{In a certain sense, the base of the spherical unitary principal series is located at $x=\infty$.  This can be made precise using the ``deformation to the normal cone'' family of Harish-Chandra pairs over the projective line considered in \cite[2.3.1]{Ber2016}.}

\item The   unitary representation  of the motion group $\boldsymbol{G}|^{\sigma}_{0} $  on   the unique and infinitesimally unitary Jantzen quotient   is the unique spherical unitary irreducible representation $\pi_{-1}$ on which the Casimir element $4EF$, which generates the center of the enveloping algebra,  acts as multiplication by $-1$.  

\item The remaining infinitesimally unitary Jantzen quotients only occur at    $x = -(2m+1)^{-2}$, with $m=0,1,2\dots$.  There is a unique one at each such $x$, and  the  unitary representation of $\boldsymbol{G}|^{\sigma}_{x}\cong SU(2)$  on it is the irreducible representation of highest weight $2m$. These representations   exhaust the  unitary irreducible representations of $SU(2)$ that contain the   $0$-weight of $U(1)$.
 \end{enumerate}
 Comparing the above to the contraction families we find that 
 \begin{enumerate}[\rm (a)]
 \setcounter{enumi}{3}
\item The unitary representations for $x> 0$ are those that appear in the contraction of the unitary principal series  $SU(1,1)$ to $\pi_{-1}$ \cite{Dooley85,Subag12} (also compare \cite[sec. 9.2.4]{Vil2}), while the discretely  occurring unitary  representations in the region $x<0$   are those that appear in the contraction  of  unitary irreducible  representations of $SU(2)$  to $\pi_{-1}$ \cite{Inonu-Wigner53,Dooley83,Subag12}. 
 \end{enumerate}

 \begin{remark} Although we focused above on a family that is relevant to the theory of  contractions of representations, even for the $SU(1,1)$ case we have studied here there are other families that   show     quite different features, and may be of interest for other purposes. For instance if we analyze  the family with Casimir 
 \[
 c_{\mathcal F}(z)=\frac{1-z}{z}
 \]
 in the same way, then we obtain infinitesimally unitary quotients that include both discrete series and complementary series representations when $x>0$. See Figure~\ref{fig:complementary-series-family}. 
\end{remark}

 \begin{figure}[htb] 
  \centering
  \includegraphics[scale=0.20]{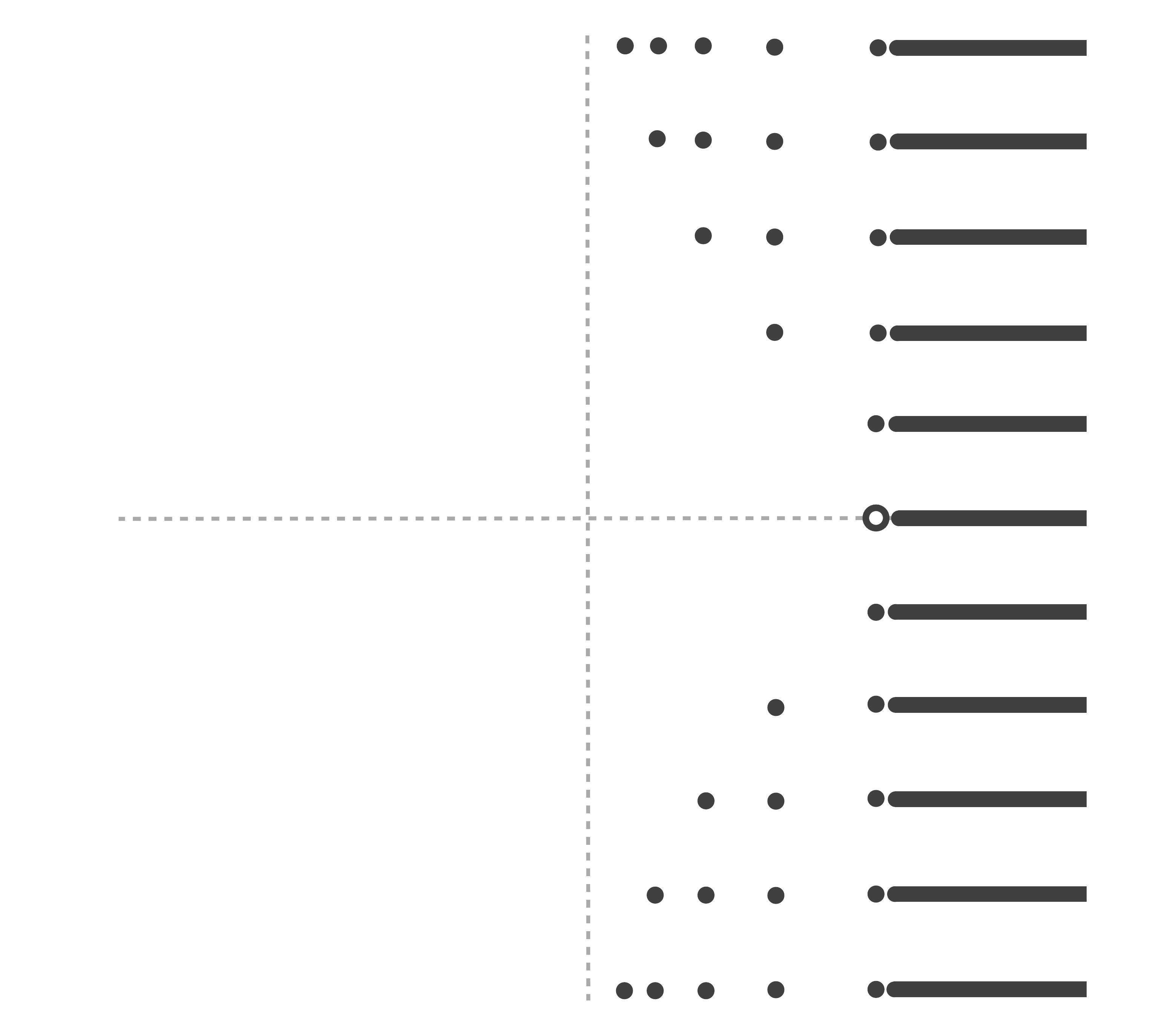} 
  \captionsetup{width=0.8\textwidth}
  \caption{\small The  infinitesimally unitary Jantzen quotients for the family with Casimir $\frac{1-x}{x}$.    There are no infinitesimally unitary quotients in the region $x\le 0$. In the region $x>0$ there arise  all the discrete series with even weights, the trivial representation (indicated by a white dot),  and all the complementary series.}
  \label{fig:complementary-series-family}
\end{figure}


\bibliography{references}
\bibliographystyle{alpha}

\end{document}